\newcommand{\HE}{Name of Handling Editor}
\newcommand{\DoS}{Month/Day/Year}
\newcommand{\DoA}{Month/Day/Year}
\newcommand{\CA}{N. Thome}
\newcommand{\Names}{M.V. Hern\'andez, M. Lattanzi, and N. Thome}
\newcommand{\Title}{On diamond partial order, one-sided star partial orders, and 1MP-inverses}
\newtheorem{thm}{Theorem}[section]
\newtheorem{propo}[thm]{Proposition}
\newtheorem{rem}[thm]{Remark}
\newcommand{\A}{{\ensuremath{\cal{A}}}}
\newcommand{\C}{{\ensuremath{\mathbb{C}}}}
\newcommand{\Cmn}{{\ensuremath{\C^{m\times n}}}}
\newcommand{\Cnm}{{\ensuremath{\C^{n\times m}}}}
\newcommand{\Cnn}{{\ensuremath{\C^{n\times n}}}}
\newcommand{\Cmm}{{\ensuremath{\C^{m\times m}}}}
\newcommand{\Ra}{{\ensuremath{\cal R}}}
\numberwithin{equation}{section}
\begin{document}

\bibliographystyle{plain}

\setcounter{page}{1}

\thispagestyle{empty}

 \title{On diamond partial order, one-sided star partial orders, and 1MP-inverses\thanks{Received
 by the editors on \DoS.
 Accepted for publication on \DoA. 
 Handling Editor: \HE. Corresponding Author: \CA}}
 
\author{
M.V.\ Hern\'andez\thanks{Universidad Nacional de La Pampa, FCEyN, Uruguay 151, 6300 Santa Rosa, La Pampa, Argentina (mvaleriahernandez@exactas.unlpam.edu.ar, mblatt@exactas.unlpam.edu.ar). Partially supported by Universidad Nacional de La Pampa, Facultad de Ingenier\'{i}a (grant Resol. Nro. 135/19).}
\and
M.B. \ Lattanzi\footnotemark[2]
\and 
N. \ Thome\thanks{Instituto Universitario de
Matem\'atica Multidisciplinar, Universitat Polit\`ecnica de
Val\`encia, 46022 Valencia, Spain (njthome@mat.upv.es). Partially supported by Ministerio de
Ciencia e Innovaci\'on of Spain (Grant Red Tem\'atica RED2022-134176-T) and by Universidad Nacional de La Pampa, Facultad de Ingenier\'{i}a (grant Resol. Nro. 135/19).}
}

\pagestyle{myheadings}
\markboth{\Names}{\Title}

\maketitle

\begin{abstract}
This paper provides some new characterizations of the diamond partial order for rec\-tan\-gu\-lar matrices by using properties of inner inverses, minus order, and SVD decompositions. In addition, the recently introduced 1MP generalized inverse and its dual are used to characterize the diamond partial order as a ${\cal G}$-based one. Finally, the one-sided star partial order is investigated by using 1MP- and MP1-inverses.
\end{abstract}

AMS Subject Classification: 15A09, 15A24 

\textrm{Keywords}: Diamond order, 1MP and MP1- inverses, One-sided star partial order.

\section{Introduction and background}\label{Sec1}

Let $\Cmn$ be the set of $m \times n$ complex matrices. For $A\in\Cmn$, $A^{\ast},$ $A^{-1}$, $\text{rk}(A)$ and ${\Ra}(A)$ denote the conjugate transpose, the inverse ($m=n$), the rank, and the range space of $A$, respectively. As usual, $I_n$ stands for the $n \times n$ identity matrix and $0_{m\times n}$ denotes the $m \times n$ zero matrix. The subscripts will be omitted when no confusion is caused.

Let $A \in \Cmn$ and $X \in \Cnm$. It is said that  
\begin{enumerate}[(1)]
\item[{\rm (1)}]  $X$ is a $\{1\}$-inverse (or inner inverse) of $A$ when $AXA=A$,
\item[{\rm (2)}]  $X$ is a $\{2\}$-inverse (or outer inverse) of $A$ when $XAX=X$,
\item[{\rm (3)}]  $X$ is a $\{3\}$-inverse of $A$ when $(AX)^*=AX$, 
\item[{\rm (4)}]  $X$ is a $\{4\}$-inverse of $A$ when $(XA)^*=XA$.
\end{enumerate}

The set of matrices $X \in\Cnm$ satisfying the equation (1) is denoted by $\A\{1\}$ and one element of $\A\{1\}$ is denoted by $A^-$. 

 The unique matrix satisfying the equations (1)-(4) is called the Moore-Penrose inverse of $A \in \Cmn$, and is denoted by $A^\dagger$.

\begin{propo}\label{propA+0}
Let $A \in \Cmn$. Then, the Moore-Penrose inverse of $A$ satisfies the following properties:
\begin{enumerate}[(a)]
\item
$A^*=A^* A A^\dag = A^\dag A A^*$.
\item
$(AA^\dag)^\dag=AA^\dag$ and $(A^\dag A)^\dag =A^\dag A$. \label{A_proj_orto_MP}
\item \label{MP_distributiva} $(AB)^\dag = B^\dag A^\dag \mbox{ if and only if } A^* A B B^* \mbox{ is hermitian.}$ 
\end{enumerate}
\end{propo}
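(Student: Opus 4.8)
The plan is to read (a) and (b) directly off the defining Penrose equations (1)--(4) of $A^\dag$, and to recognize (c) as the reverse-order law, which is where the real work lies. For (a): since $(AA^\dag)^*=AA^\dag$ and $AA^\dag A=A$, one gets $A^*AA^\dag=A^*(AA^\dag)^*=(AA^\dag A)^*=A^*$, and symmetrically $A^\dag AA^*=(A^\dag A)^*A^*=(AA^\dag A)^*=A^*$, using $(A^\dag A)^*=A^\dag A$ and $AA^\dag A=A$. For (b): put $P:=AA^\dag$; it is Hermitian by Penrose equation (3) and idempotent since $P^2=(AA^\dag A)A^\dag=AA^\dag=P$ by Penrose equation (1), and a Hermitian idempotent satisfies all four equations (1)--(4) with itself in the role of the inverse, whence $P^\dag=P$; replacing $P$ by $A^\dag A$ gives $(A^\dag A)^\dag=A^\dag A$.

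For (c) I would prove the two implications separately, with $X:=B^\dag A^\dag$ as the only candidate for $(AB)^\dag$. A preliminary step is to translate the hypothesis: ``$A^*ABB^*$ Hermitian'' means $A^*ABB^*=BB^*A^*A$, and, using part (a) in the form of the identity ${\Ra}(MN^*)={\Ra}(MN^*N)$, this yields the two range inclusions ${\Ra}(A^*AB)\subseteq{\Ra}(B)$ and ${\Ra}(BB^*A^*)\subseteq{\Ra}(A^*)$, equivalently $BB^\dag A^*AB=A^*AB$ and $A^\dag ABB^*A^*=BB^*A^*$ in projector form. For the ``if'' direction I would then substitute these identities into a direct verification of the Penrose equations (1)--(4) for the pair $(AB,X)$, expanding $A^\dag=(A^*A)^\dag A^*$ and $B^\dag=B^*(BB^*)^\dag$ when convenient and again using part (a). For the ``only if'' direction I would start from equations (1), (3), (4) applied to $(AB)^\dag=X$ --- that is, from the facts that $ABB^\dag A^\dag$ and $B^\dag A^\dag AB$ are Hermitian and that $ABB^\dag A^\dag AB=AB$ --- and manipulate them, via the same expansions and the same use of (a), to arrive at $A^*ABB^*=BB^*A^*A$.

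The step I expect to be the main obstacle is the ``only if'' direction, and in particular recovering the Hermitian identity from the Penrose equations for $X=(AB)^\dag$: one must be careful, because the range inclusions on their own constrain only the \emph{ranges} of $A^*A$ and $BB^*$, not these operators themselves, so the argument has to exploit the equations for $(AB)^\dag$ rather tightly. The verifications are elementary but involve products of four or five factors in which one has to track precisely which orthogonal projector ($AA^\dag$, $A^\dag A$, $BB^\dag$, or $B^\dag B$) is inserted or absorbed at each step, with part (a) as the recurring tool. Statement (c) is in essence Greville's reverse-order law, so if a self-contained proof is not wanted it can alternatively be cited.
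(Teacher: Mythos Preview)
The paper does not prove this proposition at all: it is stated in Section~\ref{Sec1} as background, with no argument and no citation attached, and is used later as a black box. So there is no ``paper's own proof'' against which to compare your attempt.

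That said, your approach is sound. Parts (a) and (b) you dispatch correctly and completely from the Penrose equations. Your outline for (c) is the right identification --- this is Greville's reverse-order law --- and the key translation step you give is valid: if $A^*ABB^*=BB^*A^*A$, then for any $x$ one has $A^*ABx=A^*AB(B^\dag Bx)=A^*ABB^*z=BB^*A^*Az\in\Ra(B)$ (writing $B^\dag Bx=B^*z$ since $\Ra(B^\dag)=\Ra(B^*)$), which gives $\Ra(A^*AB)\subseteq\Ra(B)$, and symmetrically $\Ra(BB^*A^*)\subseteq\Ra(A^*)$. From these projector identities the four Penrose equations for $(AB,\,B^\dag A^\dag)$ follow by the manipulations you indicate. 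Your caution about the ``only if'' direction is appropriate; it does require more than just the range inclusions and leans on the Hermitian conditions $(ABB^\dag A^\dag)^*=ABB^\dag A^\dag$ and $(B^\dag A^\dag AB)^*=B^\dag A^\dag AB$ together with (a). Since the paper treats the whole proposition as known, your closing remark that (c) can simply be cited as Greville's theorem is in fact exactly what the paper does implicitly.
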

\begin{thm}{\rm \cite[Theorem 2.3.8]{MiBhMa}}\label{Amenos}
Let $A \in {\mathbb C}^{m \times n}$ and $A^- \in \A\{1\}$. Then: 
\begin{equation*} 
\begin{split}
\A\{1\}\,&=\,\{A^- + U - A^-AUAA^-: U \in \Cnm \text{ arbitrary}\}\\
&=\,\{A^- + (I - A^-A)V + W(I - AA^-): V,W \in \Cnm \text{ arbitrary}\}.
\end{split}
\end{equation*}
\end{thm}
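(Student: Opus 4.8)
The plan is to establish both equalities by the usual double-inclusion argument, and the only fact I expect to need is the defining identity $AA^-A=A$ (so Proposition~\ref{propA+0} will not actually be invoked). Write $S_1:=\{A^-+U-A^-AUAA^-:U\in\Cnm\}$ and $S_2:=\{A^-+(I-A^-A)V+W(I-AA^-):V,W\in\Cnm\}$; I would prove $\A\{1\}=S_1$ first and then $S_1=S_2$.

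For $S_1\subseteq\A\{1\}$, I would simply substitute a generic $X=A^-+U-A^-AUAA^-$ into $AXA$ and expand: using $AA^-A=A$ repeatedly, the term $AA^-AUAA^-A$ collapses to $AUA$, which cancels the middle term and leaves $AXA=AA^-A=A$. For the reverse inclusion $\A\{1\}\subseteq S_1$, given any $X$ with $AXA=A$, I would set $U:=X-A^-$; the point to notice is that then $AUA=AXA-AA^-A=A-A=0$, so $A^-AUAA^-=A^-(AUA)A^-=0$ and the parametrized expression reduces to $A^-+U=X$. This is the one slightly non-obvious step, and it is what makes the first parametrization surjective onto $\A\{1\}$.

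For the second equality, $S_2\subseteq\A\{1\}$ is immediate because $A(I-A^-A)=0$ and $(I-AA^-)A=0$, so both correction terms vanish when sandwiched inside $A(\cdot)A$. To get $\A\{1\}=S_1\subseteq S_2$ I would take a generic element of $S_1$ and rewrite its correction term by adding and subtracting $A^-AU$, namely $U-A^-AUAA^-=(I-A^-A)U+A^-AU(I-AA^-)$, so that the choice $V:=U$, $W:=A^-AU$ exhibits the element as a member of $S_2$. The reverse inclusion $S_2\subseteq S_1$ then comes for free from $S_2\subseteq\A\{1\}=S_1$, closing the loop. I do not anticipate any real obstacle here; the whole argument is elementary matrix bookkeeping, the two things to keep an eye on being the identity $AUA=0$ for $U=X-A^-$ and the $\pm A^-AU$ trick that converts the one-parameter form into the two-parameter form.
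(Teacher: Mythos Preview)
Your argument is correct in every step: the verification that each parametrized expression is a $\{1\}$-inverse is routine, the key idea $AUA=0$ for $U:=X-A^-$ gives surjectivity of the first form, and the add-and-subtract trick $U-A^-AUAA^-=(I-A^-A)U+A^-AU(I-AA^-)$ cleanly converts $S_1$ into $S_2$. Note, however, that the paper does not supply its own proof of this theorem at all; it is quoted as a known result from \cite[Theorem 2.3.8]{MiBhMa} and used as a tool. So there is nothing to compare against, but your self-contained proof is sound and is in fact the standard one.
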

For any matrix $A \in \Cmn$ with ${\rm rk}(A)=a>0$, a singular value decomposition (SVD, for short) \cite{BIG,CaMe,MiBhMa} is given by
\begin{equation}\label{SVDA}
A= U\left( \begin{array}{cc}
D_a&0\\
0&0
\end{array}
\right) V^*,
\end{equation} where $U\in\mathbb{C}^{m \times m}$ and $V\in\mathbb{C}^{n \times n}$ are unitary matrices and $D_a \in \mathbb C^{a \times a}$ is a positive definite diagonal matrix. In this case, it is well known that the general form for $\{1\}$-inverses of $A$ is given by 
\begin{equation}\label{unoinversa}
A^- = V \left( \begin{array}{cc}
D_a^{-1}&A_{12}\\
A_{21}&A_{22}
\end{array}
\right) U^*,
\end{equation}
partitioned according to the partition of $A$. In particular, the Moore-Penrose inverse of $A$ can be represented as
\begin{equation}\label{unoinversa}
A^\dag = V \left( \begin{array}{cc}
D_a^{-1}&0\\
0&0
\end{array}
\right) U^*.
\end{equation}

Let $A, B \in\Cmn$. It is known that the \textit{space preorder} (denoted by $\prec^s$) and the \textit{minus partial order} (denoted by $\leq^-$) are defined as:
\begin{enumerate}
\item $A\,\prec^s\,B$ iff ${\Ra}(A) \subseteq {\Ra}(B)$ and ${\Ra}(A^*) \subseteq {\Ra}(B^*)$;
\item $A\,\leq^- \,B$ iff there exists $A^- \in \A\{1\}$ such that $A^- A = A^- B$ and $A A^-=BA^-$.
\end{enumerate} 
The \textit{diamond partial order} for complex matrices, denoted by $\leq^\diamond$,    was introduced by J.K. Baksalary and J. Hauke in \cite{BH} in the following way. For two given matrices $A, B \in\Cmn$, 

\centerline{$A\,\leq^\diamond\,B \quad \text{iff} \quad A\,\prec^s\,B$ and $AB^*A= AA^*A$.}
Some properties of this partial order can be found in \cite{ArMa,BuMaMo,LePaTh}.

The following properties are known (see \cite{Ba,BH,MiBhMa, Her}). The notation $\mathcal A^\dagger\{1\}$ denotes the set of all the inner inverses of $A^\dag$.

\begin{propo}\label{propiedadesdiamond}
Let $A$ and $B$ be complex matrices of size $m \times n$. The following conditions are equivalent:
\begin{enumerate}[(a)]
\item[{\rm (a)}] 
$A\,\leq^\diamond\,B$.
\item[{\rm (b)}] 
$A^\dagger\,\leq^-\,B^\dagger$.
\item[{\rm (c)}]
$\mathcal{B}^\dag\{1\}\,\subseteq\,\mathcal{A}^\dag\{1\}$.
\item[{\rm (d)}] 
$A^*\,\leq^\diamond\,B^*$.
\item [{\rm (e)}]
$UAV^*\,\leq^\diamond\,UBV^*$, for any unitary matrices $U$ and $V$ of suitable sizes. 
\end{enumerate}
\end{propo}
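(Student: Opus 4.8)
I would show that (a) is equivalent to each of (b)--(e) in turn; the only substantial step is (a)$\Leftrightarrow$(b) (together with the auxiliary (b)$\Leftrightarrow$(c)). The two ``symmetric'' equivalences are routine. For (a)$\Leftrightarrow$(e): if $U\in\Cmm$, $V\in\Cnn$ are unitary then $\Ra(UAV^*)=U\,\Ra(A)$ and $\Ra((UAV^*)^*)=V\,\Ra(A^*)$, so $UAV^*\prec^s UBV^*$ iff $A\prec^s B$; and cancelling the unitary factors, $(UAV^*)(UBV^*)^*(UAV^*)=U(AB^*A)V^*$ and similarly $(UAV^*)(UAV^*)^*(UAV^*)=U(AA^*A)V^*$, so the identity $AB^*A=AA^*A$ is preserved both ways. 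Hence $A\leq^\diamond B$ iff $UAV^*\leq^\diamond UBV^*$ for every such $U,V$, and $U=I_m$, $V=I_n$ recovers (a) from (e). For (a)$\Leftrightarrow$(d): since $(A^*)^*=A$ and $(B^*)^*=B$, the range inclusions defining $A^*\prec^s B^*$ are literally those defining $A\prec^s B$, while $A^*BA^*=A^*AA^*$ is the conjugate transpose of $AB^*A=AA^*A$; so (a) and (d) coincide.

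\textbf{(a)$\Leftrightarrow$(b).} I would use that the condition $A\leq^\diamond B$ splits as a range part, $A\prec^s B$, together with one matrix equation, $AB^*A=AA^*A$, and transfer each to the Moore--Penrose inverses. From Proposition~\ref{propA+0}(a) (together with the Penrose equations) one has $\Ra(A^\dagger)=\Ra(A^*)$ and $\Ra((A^\dagger)^*)=\Ra(A)$ (and the same for $B$), so $A^\dagger\prec^s B^\dagger$ is equivalent to $A\prec^s B$. Next I would invoke the known characterization of the minus order by inner inverses (see \cite{MiBhMa}; it also follows quickly from Theorem~\ref{Amenos}): for $P,Q$ of the same size, $P\leq^- Q$ iff $P\prec^s Q$ and $PQ^\dagger P=P$. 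Applying this with $P=A^\dagger$, $Q=B^\dagger$ and using $(B^\dagger)^\dagger=B$ yields
\[
A^\dagger\leq^- B^\dagger\iff A\prec^s B\ \text{ and }\ A^\dagger BA^\dagger=A^\dagger .
\]
It then remains to see that, for $A$ fixed, the equations $AB^*A=AA^*A$ and $A^\dagger BA^\dagger=A^\dagger$ are equivalent; for this I would reduce to the SVD normal form \eqref{SVDA}. Using (e) for $\leq^\diamond$, the fact that $\dagger$ commutes with multiplication by unitary matrices (so $(U^*AV)^\dagger=V^*A^\dagger U$), and the invariance of $\leq^-$ under multiplication by unitaries on both sides, one may assume $A=\left(\begin{smallmatrix}D_a&0\\ 0&0\end{smallmatrix}\right)$ with $D_a$ positive definite diagonal. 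Writing $B$ in the conforming block form with leading $a\times a$ block $B_{11}$, a one-line computation gives $AB^*A=AA^*A\Leftrightarrow D_aB_{11}^*D_a=D_a^3$ and $A^\dagger BA^\dagger=A^\dagger\Leftrightarrow D_a^{-1}B_{11}D_a^{-1}=D_a^{-1}$, each of which is just $B_{11}=D_a$. Chaining the three equivalences gives (a)$\Leftrightarrow$(b); the case $A=0$ is immediate.

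\textbf{(b)$\Leftrightarrow$(c), and the main obstacle.} This follows from the companion fact about the minus order (again \cite{MiBhMa}, with Theorem~\ref{Amenos} as the underlying parametrisation of $\{1\}$-inverses): for same-size $P,Q$, $P\leq^- Q$ iff $Q\{1\}\subseteq P\{1\}$; taking $P=A^\dagger$, $Q=B^\dagger$ turns this into exactly $\mathcal{B}^\dagger\{1\}\subseteq\mathcal{A}^\dagger\{1\}$. I expect the real work to sit in (a)$\Leftrightarrow$(b): the key is to have the $\leq^-$-characterization in the form $PQ^\dagger P=P$, which keeps $B$ rather than $B^\dagger$ in the computation and makes the SVD reduction painless, and to transfer the range conditions and the matrix equations once each instead of re-treating the three orders from scratch. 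What remains is clerical care in tracking which block of $B$, and which of $A$, $A^*$, $A^\dagger$, appears where under the SVD reduction.
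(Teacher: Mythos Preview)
The paper does not prove Proposition~\ref{propiedadesdiamond}; it is stated as a known result with the citation ``see \cite{Ba,BH,MiBhMa,Her}'' and no argument is given. So there is no proof in the paper to compare your proposal against.

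That said, your proposal is correct and self-contained. The argument for (a)$\Leftrightarrow$(d) and (a)$\Leftrightarrow$(e) is the obvious unitary-invariance/transpose check. For (a)$\Leftrightarrow$(b) you rely on the characterization $P\leq^- Q \iff P\prec^s Q$ and $PQ^\dagger P=P$; this is indeed in \cite{MiBhMa}, and can also be recovered directly from Theorem~\ref{Amenos} as you indicate: if $P\prec^s Q$ then $P(I-Q^\dagger Q)=0=(I-QQ^\dagger)P$, so $PQ^-P=PQ^\dagger P$ for every $Q^-\in Q\{1\}$, and $PQ^\dagger P=P$ yields $Q\{1\}\subseteq P\{1\}$, hence $P\leq^- Q$. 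Your SVD reduction showing $AB^*A=AA^*A\Leftrightarrow B_{11}=D_a\Leftrightarrow A^\dagger BA^\dagger=A^\dagger$ is clean and uses only that $D_a$ is real and invertible. The step (b)$\Leftrightarrow$(c) is exactly the classical ``$P\leq^- Q\iff Q\{1\}\subseteq P\{1\}$'' specialised to $P=A^\dagger$, $Q=B^\dagger$. One cosmetic point: when you write ``$B_{11}=D_a$'' from $D_aB_{11}^*D_a=D_a^3$ you are tacitly using that $D_a$ is Hermitian (so $B_{11}^*=D_a$ gives $B_{11}=D_a$); this is fine but worth one word.
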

\begin{propo}\cite[Theorem 3.4.6]{MiBhMa}\label{caractordenmenosidempotentes}
Let $B \in \Cmn$ be a non-null matrix with a full-rank factorization $B=PQ$ such that $\rm{rk}(B)=b$. Then the class of all matrices $A \in \Cmn$ such that $A\leq^- B$ is given by 
$$\{PTQ :\, T \in {\mathbb C}^{b \times b} \text{ is idempotent}\}.$$
\end{propo}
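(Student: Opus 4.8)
The plan is to establish the set equality by proving both inclusions directly from the definition of the minus partial order, exploiting the ``separation'' provided by the full-rank factorization $B=PQ$. Here $P\in\C^{m\times b}$ has full column rank and $Q\in\C^{b\times n}$ has full row rank, so $P$ admits a left inverse $P_L$ (with $P_LP=I_b$) and $Q$ admits a right inverse $Q_R$ (with $QQ_R=I_b$); moreover $\Ra(B)=\Ra(P)$, $\Ra(B^{*})=\Ra(Q^{*})$, and $T\in\C^{b\times b}$ is uniquely determined by $A=PTQ$ since $P$ is left cancellable and $Q$ is right cancellable.

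\textbf{($\supseteq$).} Given $A=PTQ$ with $T^{2}=T$, the natural candidate for an inner inverse witnessing $A\leq^{-}B$ is $X:=Q_LTP_L$, that is $X:=Q_RTP_L\in\Cnm$. Using $P_LP=I_b$, $QQ_R=I_b$ and $T^{3}=T$, one checks routinely that $AXA=PT^{3}Q=A$ (so $X\in\A\{1\}$), that $XA=Q_RT^{2}Q=Q_RTQ=XB$, and that $AX=PT^{2}P_L=PTP_L=BX$. Hence $A\leq^{-}B$.

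\textbf{($\subseteq$).} Suppose $A\leq^{-}B$, witnessed by $A^{-}\in\A\{1\}$ with $A^{-}A=A^{-}B$ and $AA^{-}=BA^{-}$. Left-multiplying the first identity by $A$ and right-multiplying the second by $A$, and using $AA^{-}A=A$, we obtain $A=AA^{-}B$ and $A=BA^{-}A$; in particular $\Ra(A)\subseteq\Ra(B)=\Ra(P)$ and $\Ra(A^{*})\subseteq\Ra(B^{*})=\Ra(Q^{*})$, so $PP_LA=A$ and $AQ_RQ=A$. Therefore $A=PP_LAQ_RQ=P(P_LAQ_R)Q$, which displays $A=PTQ$ with $T:=P_LAQ_R$. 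It remains to verify $T^{2}=T$. From $A=AA^{-}B=AA^{-}PQ$ we get $AQ_R=AA^{-}P$, hence $T=P_LAA^{-}P$, and then
\[
T^{2}=P_LAA^{-}P\,P_LAA^{-}P=P_LAA^{-}(PP_LA)A^{-}P=P_L(AA^{-}A)A^{-}P=P_LAA^{-}P=T,
\]
where we used $PP_LA=A$ and $AA^{-}A=A$. Thus $T$ is idempotent, which completes the proof.

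I expect the only delicate point to be the idempotency of $T$ in the second inclusion: the trick is that the obvious representative $T=P_LAQ_R$ is awkward to square, whereas it can be rewritten as $T=P_LAA^{-}P$ (an identity that does \emph{not} require $A^{-}$ to be a reflexive inverse), after which the factor $AA^{-}$ collapses via the two available identities $PP_LA=A$ and $AA^{-}A=A$. An alternative route is to invoke the rank characterization $\mathrm{rk}(B)=\mathrm{rk}(A)+\mathrm{rk}(B-A)$ of $\leq^{-}$ together with the elementary fact that a square matrix $T$ is idempotent if and only if $\mathrm{rk}(T)+\mathrm{rk}(I-T)$ equals its order, applied to $A=PTQ$ and $B-A=P(I-T)Q$ (for which $\mathrm{rk}(A)=\mathrm{rk}(T)$, $\mathrm{rk}(B-A)=\mathrm{rk}(I-T)$, $\mathrm{rk}(B)=b$); this handles both inclusions simultaneously but shifts the burden onto establishing the rank formula.
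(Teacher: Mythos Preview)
Your proof is correct. Note, however, that the paper does not actually prove this proposition: it is stated as a citation of \cite[Theorem 3.4.6]{MiBhMa} and used as a known tool, so there is no proof in the paper to compare against.

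Since you have supplied a self-contained argument, a brief assessment is still warranted. Both inclusions are handled cleanly. In the $(\supseteq)$ direction your explicit witness $X=Q_RTP_L$ (the first occurrence ``$Q_LTP_L$'' is evidently a typo) does the job, and the verifications are routine. In the $(\subseteq)$ direction the key step is indeed the one you flag: rewriting $T=P_LAQ_R$ as $T=P_LAA^{-}P$ via $AQ_R=AA^{-}P$, which follows from $A=AA^{-}B=AA^{-}PQ$ after right-multiplying by $Q_R$. Once this is done, the identity $PP_LA=A$ (which holds because $PP_L$ is a projector onto $\Ra(P)\supseteq\Ra(A)$) together with $AA^{-}A=A$ collapses $T^{2}$ to $T$ exactly as you wrote. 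The alternative rank-based route you sketch is also valid and is in fact closer in spirit to how such results are often presented in the literature, but your direct approach has the merit of being fully elementary and not relying on the rank-additivity characterization of the minus order.
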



\begin{propo}\cite[Corollary 1.2.1]{Her}\label{descordenmenosMBM}
Let $A$, $B$ $\in \,\Cmn$ with $a= \rm{rk}(A)$, $b=\rm{rk}(B)$, and $b>a\geq1$. Then $A\,\leq^-\,B$ if and only if there exist unitary matrices $U \in \,\Cmn$ and $V \in \,\Cnn$, a diagonal positive definite matrix $D_a$, and nonsingular matrices $Z \in \C^{(b-a) \times (b-a)}$ and $M \in \,\C^{b \times b}$ such that
\begin{equation}\label{eqdescmenos}
\begin{split}
A= U \left( \begin{array}{ccc}
D_a&&\\
&0_{b-a}&\\
&&0
\end{array}
\right) V^* \qquad \mbox{ and } \qquad 
B= U \left( \begin{array}{cc}
M&\\
&0
\end{array}
\right) V^*,
\end{split}
\end{equation} 
where 
$M= \left( \begin{array}{cc}
D_a&\\
&0
\end{array}
\right)\,\,+\,\,
\left( \begin{array}{c}
-D_a L_1\\
I_{b-a}
\end{array}
\right)Z \left( \begin{array}{cc}
-L_2 D_a & I_{b-a}
\end{array}\right),$ for arbitrary $L_1$ and $L_2$ of appropriate sizes.
\end{propo}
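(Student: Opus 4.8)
\noindent The plan is to prove both implications by compressing the problem to a nonsingular $b\times b$ ``core'' and then reading the parametric shape of that core directly off the minus order. Throughout I put $A_0:=\begin{pmatrix}D_a & 0\\ 0 & 0_{b-a}\end{pmatrix}\in\C^{b\times b}$.

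For the sufficiency I would start from the remark that the displayed formula for $M$ forces $M$ to be nonsingular, since it factors as
\[
M=\begin{pmatrix}I_a & -D_aL_1\\ 0 & I_{b-a}\end{pmatrix}\begin{pmatrix}D_a & 0\\ 0 & Z\end{pmatrix}\begin{pmatrix}I_a & 0\\ -L_2D_a & I_{b-a}\end{pmatrix},
\]
a product of three nonsingular matrices; hence $\mathrm{rk}(B)=b$. Setting $P:=U\begin{pmatrix}I_b\\ 0\end{pmatrix}$ and $Q:=\begin{pmatrix}I_b & 0\end{pmatrix}V^*$, the displayed identities say that $B=P\,(MQ)$ is a full-rank factorization of $B$ and that $A=P\,A_0\,Q=P\,(A_0M^{-1})\,(MQ)$. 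By Proposition~\ref{caractordenmenosidempotentes} it then remains only to check that $T:=A_0M^{-1}$ is idempotent; the triangular factorization above gives $A_0M^{-1}=\begin{pmatrix}I_a & D_aL_1\\ 0 & 0\end{pmatrix}$, which is visibly idempotent, so $A\leq^-B$.

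For the necessity, assume $A\leq^-B$. First I would pick unitary $U\in\Cmm$ and $V\in\Cnn$ whose leading $b$ columns span $\Ra(B)$ and $\Ra(B^*)$; then $U^*BV=\begin{pmatrix}M_0 & 0\\ 0 & 0\end{pmatrix}$ with $M_0\in\C^{b\times b}$ nonsingular, and, since $A\leq^-B$ forces $\Ra(A)\subseteq\Ra(B)$ and $\Ra(A^*)\subseteq\Ra(B^*)$, also $U^*AV=\begin{pmatrix}A' & 0\\ 0 & 0\end{pmatrix}$ with $\mathrm{rk}(A')=a$; a short computation restricting a common inner inverse shows $A'\leq^-M_0$ inside $\C^{b\times b}$. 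Next I would absorb into $U$ and $V$ the block-diagonal unitary factors $W_1\oplus I_{m-b}$ and $W_2\oplus I_{n-b}$ coming from a singular value decomposition of $A'$, so that $A'$ becomes $A_0$, with $D_a$ positive definite and diagonal, while $M_0$ becomes a still nonsingular $M\in\C^{b\times b}$ with $A_0\leq^-M$. This already produces the asserted global block forms for $A$ and $B$, and only the shape of $M$ remains.

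To finish, I would note that since $M$ is nonsingular both $M=I_b\cdot M$ and $M=M\cdot I_b$ are full-rank factorizations, so Proposition~\ref{caractordenmenosidempotentes} yields idempotents $T,T'\in\C^{b\times b}$ (necessarily of rank $a$) with $A_0=TM$ and $A_0=MT'$; hence $M-A_0=(I_b-T)M$ has rank $b-a$, while $\C^b=\Ra(A_0)\oplus\Ra(M-A_0)$ and $\C^b=\Ra(A_0^*)\oplus\Ra((M-A_0)^*)$, with $\Ra(A_0)=\Ra(A_0^*)$ equal to the span of the first $a$ coordinate vectors. Taking a full-rank factorization $M-A_0=GH$ and partitioning $G=\begin{pmatrix}G_1\\ G_2\end{pmatrix}$ and $H=\begin{pmatrix}H_1 & H_2\end{pmatrix}$ with $G_2,H_2$ of order $b-a$, the two direct-sum decompositions force $G_2$ and $H_2$ to be nonsingular (if $G_2c=0$ then $Gc\in\Ra(A_0)\cap\Ra(M-A_0)=\{0\}$, so $c=0$, and symmetrically for $H_2$ via $\Ra(A_0^*)$). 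Finally I would put $Z:=G_2H_2$, $L_1:=-D_a^{-1}G_1G_2^{-1}$ and $L_2:=-H_2^{-1}H_1D_a^{-1}$; then $\begin{pmatrix}-D_aL_1\\ I_{b-a}\end{pmatrix}=GG_2^{-1}$ and $\begin{pmatrix}-L_2D_a & I_{b-a}\end{pmatrix}=H_2^{-1}H$, so $\begin{pmatrix}-D_aL_1\\ I_{b-a}\end{pmatrix}Z\begin{pmatrix}-L_2D_a & I_{b-a}\end{pmatrix}=GH=M-A_0$, which is exactly the claimed expression for $M$. The block algebra is routine once the set-up is in place; the step I expect to be the real obstacle is the nonsingularity of the corner blocks $G_2$ and $H_2$ --- that is, the complementarity of $\Ra(A_0)$ with $\Ra(M-A_0)$ and of $\Ra(A_0^*)$ with $\Ra((M-A_0)^*)$ --- since this is where the content of the minus order genuinely enters and what ultimately makes the $I_{b-a}$ blocks appear in the factors.
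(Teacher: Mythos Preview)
The paper does not contain a proof of this proposition: it is quoted verbatim from \cite[Corollary 1.2.1]{Her} and used as a tool, so there is no ``paper's own proof'' against which to compare your attempt.

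That said, your argument is correct and self-contained. For sufficiency, your triangular factorization of $M$ and the verification that $A_0M^{-1}=\begin{pmatrix}I_a & D_aL_1\\ 0 & 0\end{pmatrix}$ is idempotent feed directly into Proposition~\ref{caractordenmenosidempotentes}, exactly as you describe. For necessity, the compression to a nonsingular $b\times b$ core via the range inclusions $\Ra(A)\subseteq\Ra(B)$ and $\Ra(A^*)\subseteq\Ra(B^*)$ (both consequences of $A\leq^-B$) is standard, and your reduction $A'\leq^-M_0$ then $A_0\leq^-M$ by absorbing the SVD unitaries of $A'$ is clean. The crucial and only nontrivial step---the invertibility of the corner blocks $G_2$ and $H_2$---you handle correctly: from $A_0=TM$ and $M-A_0=(I-T)M$ with $T$ idempotent and $M$ nonsingular you get $\Ra(A_0)=\Ra(T)$ and $\Ra(M-A_0)=\Ra(I-T)$, hence the direct-sum decomposition of $\C^b$, and the dual argument via $A_0=MT'$ gives the row-space complementarity. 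The final substitutions $Z=G_2H_2$, $L_1=-D_a^{-1}G_1G_2^{-1}$, $L_2=-H_2^{-1}H_1D_a^{-1}$ recover the displayed form of $M$ on the nose. One small remark: in your sentence ``a short computation restricting a common inner inverse shows $A'\leq^-M_0$'', it is worth spelling out that the $(1,1)$ block $G_1$ of a witnessing inner inverse $G$ for $\mathrm{diag}(A',0)\leq^-\mathrm{diag}(M_0,0)$ already satisfies $A'G_1A'=A'$, $G_1A'=G_1M_0$, and $A'G_1=M_0G_1$; this is immediate from the block products but is the place a reader might pause.
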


The main aim of this paper is to provide new points of view of diamond and one-sided star partial orders. We would like to highlight that the definition of diamond partial order was not given from an algebraic point of view but by using a geometrical approach by means of range spaces. This paper clarifies some aspects for this partial order which was not investigated before in terms of 
 generalized inverses. In this paper, Theorem \ref{diamondcon1MPMP1} establishes necessary and sufficient conditions characterizing diamond partial order in terms of the orthogonal projectors $AA^\dag$ and $A^\dag A$ as well as by means of 1MP- and MP1-inverses (defined in Section 3) and, somehow, it fills the mentioned gap. Moreover, Theorem \ref{descdiamond} states all explicit forms for matrices $A$ and $B$ ordered under diamond order, which allows us to give examples in a quick way. This investigation leads us to connect, in a very natural way, one-sided star partial orders with 1MP- and MP1-inverses. Likewise for the diamond order, the one-sided star partial order was not originally presented in standard way as a ${\cal G}$-based order (as, for example, minus partial order is presented before), and this paper does it in Theorem \ref{estizq}, and this other gap is filled as well.

The paper is organized as follows. In Section \ref{Sec2}, we give some characterizations and properties of the diamond partial order for rectangular matrices using SVD decompositions and Theorem \ref{Amenos}. In Section \ref{Sec4}, we provide a characterization of the diamond partial order using equalities involving 1MP inverses and its duals. Moreover, characterizations of the one-sided star orders based on 1MP inverses and its duals are given.
\section{Characterizations of the diamond partial order for rectangular matrices}\label{Sec2}

In this section, we obtain a new characterization, based on a SVD of the Moore-Penrose inverse of $A$, 
for $A \in \,\Cmn$ to be below $B \in \,\Cmn$
under the diamond partial order. 
Moreover, by using  Theorem \ref{Amenos}, a characterization and some properties of the diamond partial order are also given. Finally, the predecessors of a matrix $B$ under the diamond  order are obtained from a SVD of $B$. 

\begin{thm}\label{descdiamond}
Let $A$, $B$ $\in \,\Cmn$ with $a={\rm rk}(A)$, $b={\rm rk}(B)$, and $b>a\geq1$. The following statements are equivalent:
\begin{enumerate}[(1)]
\item[{\rm (a)}] 
$A\,\leq^\diamond\,B$.
\item[{\rm (b)}] 
There exist unitary matrices $U \in \Cmm$ and $V \in \Cnn$, a    positive definite diagonal matrix $\Lambda_a$, and nonsingular matrices $Z \in \C^{(b-a) \times (b-a)}$ and $\Gamma\in \C^{b \times b}$ such that 
\begin{equation*}
\begin{split}
A = U{\rm {diag}}(\Lambda_a,0_{b-a},0)V^*, \qquad B=U{\rm {diag}}(\Gamma,0)V^*,
\end{split}
\end{equation*}
with
\begin{equation*}
\Gamma =\left( \begin{array}{cc}
\Lambda_a& L_1\\
L_2 & Z^{-1} + L_2 \Lambda_a^{-1} L_1
\end{array}
\right),
\end{equation*}
 where $L_1$ and $L_2$ are arbitrary matrices of suitable sizes.
\end{enumerate}
\end{thm}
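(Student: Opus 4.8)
The plan is to deduce this characterization from the known structure theorem for the minus order, Proposition~\ref{descordenmenosMBM}, together with the equivalence $A\leq^\diamond B\iff A^\dagger\leq^- B^\dagger$ from Proposition~\ref{propiedadesdiamond}(b). The point is that the diamond order on $A,B$ translates, via Moore--Penrose inversion, into a minus order between $A^\dagger$ and $B^\dagger$, and Proposition~\ref{descordenmenosMBM} gives an explicit simultaneous SVD-type form for any pair ordered under $\leq^-$. Since $\mathrm{rk}(A^\dagger)=a<b=\mathrm{rk}(B^\dagger)$, that proposition applies directly.

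First I would prove (b)$\Rightarrow$(a). Given the stated forms of $A$ and $B$, observe that $\Gamma$ is nonsingular (its Schur complement with respect to the $\Lambda_a$ block is exactly $Z^{-1}$, which is nonsingular), so $b=\mathrm{rk}(B)$ is consistent; then one computes $A^\dagger = V\,\mathrm{diag}(\Lambda_a^{-1},0_{b-a},0)\,U^*$ and $B^\dagger = V\,\mathrm{diag}(\Gamma^{-1},0)\,U^*$. The key computational step is to verify that $A^\dagger\leq^- B^\dagger$, i.e.\ to exhibit a $\{1\}$-inverse $X$ of $A^\dagger$ with $XA^\dagger=XB^\dagger$ and $A^\dagger X=B^\dagger X$; equivalently, by Proposition~\ref{caractordenmenosidempotentes} applied to a full-rank factorization of $B^\dagger$, one checks that $A^\dagger$ has the form $P(\text{idempotent})Q$. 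This amounts to showing $\mathrm{diag}(\Lambda_a^{-1},0_{b-a})=\Gamma^{-1}E$ for a suitable idempotent $E$ on $\C^b$, which is a direct block computation once one has an explicit formula for $\Gamma^{-1}$ via the Schur complement (note the particular shape of $\Gamma$ is designed precisely so that $\Gamma^{-1}$ has a clean block form). Then Proposition~\ref{propiedadesdiamond}(b) gives $A\leq^\diamond B$.

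For (a)$\Rightarrow$(b), I would start from $A^\dagger\leq^- B^\dagger$ and apply Proposition~\ref{descordenmenosMBM} to the pair $(A^\dagger,B^\dagger)$ (with the roles of $m$ and $n$, hence of $U$ and $V$, interchanged, since $A^\dagger\in\Cnm$). This yields unitaries $\widetilde U\in\Cnn$, $\widetilde V\in\Cmm$, a positive definite diagonal $D_a$, and nonsingular $Z,M$ with $A^\dagger=\widetilde U\,\mathrm{diag}(D_a,0_{b-a},0)\,\widetilde V^*$ and $B^\dagger=\widetilde U\,\mathrm{diag}(M,0)\,\widetilde V^*$, where $M$ has the displayed rank-one-perturbation form. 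Taking Moore--Penrose inverses gives $A=\widetilde V\,\mathrm{diag}(D_a^{-1},0_{b-a},0)\,\widetilde U^*$ and $B=\widetilde V\,\mathrm{diag}(M^{-1},0)\,\widetilde U^*$; setting $U:=\widetilde V$, $V:=\widetilde U$, $\Lambda_a:=D_a^{-1}$, $\Gamma:=M^{-1}$ puts $A$ and $B$ in the required shape, and it remains to show that $M^{-1}$ has exactly the block form displayed for $\Gamma$, with a suitable nonsingular $Z$ (presumably the same $Z$, up to relabeling) and suitable $L_1,L_2$ (expressible in terms of $L_1,L_2,D_a$ from Proposition~\ref{descordenmenosMBM}). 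This last step — inverting the structured matrix $M$ and recognizing the result as the claimed $\Gamma$, while checking that the correspondence between the free parameters is a bijection — is where the real work lies, and it is the step I expect to be the main obstacle; it should succumb to the Sherman--Morrison--Woodbury formula or a direct Schur-complement computation exploiting that $M$ is a nonsingular perturbation of $\mathrm{diag}(D_a,0)$ along the rank-$(b-a)$ pieces.

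A cleaner alternative for the forward direction, which I would also consider, is to bypass $M^{-1}$ altogether: run the structure theorem for $\leq^-$ in the other direction by first writing a convenient SVD of $B$ itself, using $A\prec^s B$ to align the range spaces, and then encoding the condition $AB^*A=AA^*A$ directly in block form to force the off-diagonal/Schur-complement relation $\Gamma_{22}=Z^{-1}+L_2\Lambda_a^{-1}L_1$. Whichever route is taken, one should finally remark that the nonsingularity of $Z$ is equivalent to $\mathrm{rk}(B)=b$ given the form of $\Gamma$, so the parametrization in (b) is exactly the set of pairs with the prescribed ranks satisfying $A\leq^\diamond B$.
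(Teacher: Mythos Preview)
Your proposal is correct and follows essentially the same route as the paper: for (a)$\Rightarrow$(b) the paper likewise applies Proposition~\ref{descordenmenosMBM} to $(A^\dagger,B^\dagger)$ and then inverts the resulting $M$ via its Schur complement (the same $L_1,L_2,Z$ appear directly in $M^{-1}$, so the ``bijection of free parameters'' you anticipate as an obstacle never arises); for (b)$\Rightarrow$(a) the paper instead verifies $\mathcal{B}^\dagger\{1\}\subseteq\mathcal{A}^\dagger\{1\}$ by a block computation showing $A^\dagger X A^\dagger=A^\dagger$ for every inner inverse $X$ of $B^\dagger$, a minor variant of your idempotent check. The inversion of $M$ that you flag as the main obstacle is in fact a one-line Schur-complement identity yielding exactly the displayed $\Gamma$.
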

\begin{proof}
$(a)\Rightarrow (b)$
Suposse that $A\,\leq^\diamond\,B$. So, by Proposition \ref{propiedadesdiamond}, $A^\dagger\,\leq^-\,B^\dagger$. It is known that ${\rm rk}(A^\dagger)={\rm rk}(A)=a$ and ${\rm rk}(B^\dagger)={\rm rk}(B)=b$. By applying Proposition \ref{descordenmenosMBM} to matrices $A^\dag$ and $B^\dag$, there exist unitary matrices $V \in {\mathbb C}^{n \times n}$ and $U \in {\mathbb C}^{m \times m}$, a positive definite diagonal matrix $\Sigma_a$, and nonsingular matrices $Z \in \mathbb{C}^{(b-a)\times(b-a)}$ and $M \in \C^{b \times b}$ such that  
\begin{equation*}
A^\dag= V{\rm {diag}}(\Sigma_a,0_{b-a},0)U^*, \qquad B^\dag= V{\rm {diag}}(M,0)U^*
\end{equation*}
and
\begin{equation*}
M= \left( \begin{array}{cc}
\Sigma_a+\Sigma_a L_1 Z L_2 \Sigma_a& -\Sigma_aL_1Z\\
-ZL_2\Sigma_a&Z
\end{array}
\right),
\end{equation*}
where $L_1$ and $L_2$ are arbitrary  matrices. 

So, it is easy to see that the Schur complement of $Z$ in $M$ is $M/Z=\Sigma_a$ (see \cite{PuStIs}). Thus, $M$ is nonsingular and
\begin{equation*}
\begin{split}
M^{-1} =\left( \begin{array}{cc}
\Sigma_a^{-1}& L_1\\
L_2 & Z^{-1} + L_2 \Sigma_a L_1
\end{array}
\right).
\end{split}
\end{equation*}
Thus, by setting $\Gamma:=M^{-1}$ and $\Lambda_a := \Sigma_a^{-1}$, item $(b)$ is proved.

$(b)\Rightarrow (a)$
From $(b)$, we obtain the matrices 
\begin{equation*}
A^\dag= V{\rm {diag}}(\Lambda_a^{-1},0_{b-a},0)U^*, \qquad B^\dag= V{\rm {diag}}(\Gamma^{-1},0)U^*.
\end{equation*}
Let $X$ be an inner inverse of $B^\dag$. So,
\begin{equation*}
\begin{split}
X= U\left( \begin{array}{cc}
\Gamma&R\\
S&T
\end{array}
\right)V^*
\end{split}
\end{equation*} for some complex matrices $R, S, T$ of adequate size. 
Thus, by making some computations using the decompositions given for $A^\dagger, B^\dagger  \text{ and } X$, is easy to see that \begin{equation*}\label{ecmenos}
\begin{split}
B^\dagger X  A^\dagger = A^\dagger X B^\dagger = A^\dagger.
\end{split}
\end{equation*} 
Moreover,
\begin{equation*} 
\begin{split}
A^\dagger \! X \! A^\dagger&= \!
V  
\left(\!\! \begin{array}{cc}
\left(\!\! \begin{array}{cc} \Lambda_a^{-1}&0\\0&0_{b-a}\end{array}
\!\!\right) &0\\
0&0
\end{array}
\!\!\right) U^*U
\left(\!\! \begin{array}{cc}
\Gamma&R\\
S&T
\end{array}
\!\!\right) V^* A^\dagger 
\\
&=\! V \left(\!\! \begin{array}{cc}
\left(\!\! \begin{array}{cc} \Lambda_a^{-1}&0\\0&0_{b-a}\end{array}\!\!
\right)\!\Gamma&\left(\!\!\begin{array}{cc} \Lambda_a^{-1}&0\\0&0_{b-a}\end{array}
\!\!\right)\!R\\
0&0
\end{array}
\!\!\right)\left(\!\! \begin{array}{cc}
\left(\!\! \begin{array}{cc} \Lambda_a^{-1}&0\\0&0_{b-a}\!\end{array}
\!\!\right) &0\\
0&0
\end{array}
\!\!\right) U^*\\
&=\!V \left(\!\! \begin{array}{cc}
\left(\!\! \begin{array}{cc} \Lambda_a^{-1}&0\\0&0_{b-a}\end{array}
\!\!\right)\!\Gamma\!\left(\!\! \begin{array}{cc} \Lambda_a^{-1}&0\\ 0&0_{b-a}\end{array}
\!\!\right) &0\\
0&0
\end{array}
\!\!\right)U^*.
\end{split}
\end{equation*}
Note that
\begin{equation*}
\begin{split}
\left( \begin{array}{cc} \Lambda_a^{-1}&0\\ 0&0_{b-a}\end{array}
\right)\Gamma&=\left( \begin{array}{cc} \Lambda_a^{-1}&0\\ 0&0_{b-a}\end{array}
\right)\left( \begin{array}{cc} \Lambda_a&L_1\\L_2&Z^{-1} + L_2\Lambda_a^{-1}L_1\end{array}
\right)\\
&=\left( \begin{array}{cc} I_a&\Lambda_a^{-1}L_1\\0&0\end{array}
\right). 
\end{split}
\end{equation*}
So,
\begin{equation*}
\begin{split}
\left( \begin{array}{cc} \Lambda_a^{-1}&0\\ 0&0_{b-a}\end{array}
\right)\Gamma\left( \begin{array}{cc} \Lambda_a^{-1}&0\\ 0&0_{b-a}\end{array}
\right)&=\left( \begin{array}{cc} I_a&\Lambda_a^{-1}L_1\\0&0\end{array}
\right)\left( \begin{array}{cc} \Lambda_a^{-1}&0\\ 0&0_{b-a}\end{array}
\right)\\
&=\left( \begin{array}{cc} \Lambda_a^{-1}& 0\\ 0&0_{b-a}\end{array}
\right).
\end{split}
\end{equation*}
Thus, $A^\dagger X A^\dagger=A^\dagger$, i.e., $X$ is an inner inverse of $A^\dagger$. From Proposition \ref{propiedadesdiamond} (c), $A\,\leq^\diamond\,B$.
\end{proof}
Notice that if $a = {\rm rk}(A) = b = {\rm rk}(B)$, the proof of Theorem \ref{descdiamond} is immediate.

\begin{rem}{\rm
If $A$ and $B$ are represented by the decompositions given in Theorem \ref{descdiamond} $(b)$, then 
\begin{equation*}
\begin{split}
B^\dagger = V \left( \begin{array}{ccc}
\Lambda_a^{-1}+\Lambda_a^{-1}L_1ZL_2\Lambda_a^{-1}& -\Lambda_a^{-1}L_1Z&0\\
-ZL_2\Lambda_a^{-1} & Z&0\\
0&0&0
\end{array}
\right) U^*.
\end{split}
\end{equation*}}
\end{rem}
In what follows, another characterizations of the diamond order are given.
\begin{thm}\label{caractdiamondA+}
Let $A$, $B$ $\in \,\Cmn$. The following statements are equivalent:
\begin{enumerate}[(1)]
\item[{\rm (a)}] 
$A\,\leq^\diamond\,B$.
\item[{\rm (b)}] 
$A^\dagger\,=\,A^\dagger B A^\dagger\,+\,A^\dagger W A^\dagger \,-\,A^\dagger B B^\dagger W B^\dagger B A^\dagger$, for  arbitrary $W\,\in\,\mathbb{C}^{m\times n}$.
\end{enumerate}
\end{thm}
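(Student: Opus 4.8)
The plan is to connect statement (b) to the known characterization of the diamond order via inner inverses, namely Proposition \ref{propiedadesdiamond}(c): $A\,\leq^\diamond\,B$ iff $\mathcal{B}^\dag\{1\}\subseteq\mathcal{A}^\dag\{1\}$. The bridge is Theorem \ref{Amenos} applied to the matrix $A^\dagger$ (not $A$): since $B^\dagger B^\dagger{}^- B^\dagger = B^\dagger$ whenever $B^\dagger{}^-\in\mathcal{B}^\dag\{1\}$, and since $B^\dagger B B^\dagger = B^\dagger$ (because $B^\dagger$ is a $\{1\}$-inverse of $B$, equivalently $B$ is a $\{1\}$-inverse of $B^\dagger$), the matrix $B$ itself is a particular inner inverse of $B^\dagger$. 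Hence by Theorem \ref{Amenos}, applied with the fixed inner inverse $B$ of $B^\dagger$, the set $\mathcal{B}^\dag\{1\}$ is exactly
\begin{equation*}
\mathcal{B}^\dag\{1\}=\{\,B + W - B^\dagger{}^{-}\!B^\dagger W B^\dagger B^\dagger{}^{-} : W\in\Cmn\,\}=\{\,B + W - BB^\dagger W B^\dagger B : W\in\Cmn\,\},
\end{equation*}
where I have used $B^\dagger{}^{-}\!B^\dagger = (B^\dagger)^{-}B^\dagger$; taking $B$ as the chosen inner inverse of $B^\dagger$ gives $B^\dagger{}^{-}\!B^\dagger = BB^\dagger$ and $B^\dagger B^\dagger{}^{-} = B^\dagger B$. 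So a generic element of $\mathcal{B}^\dag\{1\}$ has the form $X_W := B + W - BB^\dagger W B^\dagger B$ for arbitrary $W$.

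Next I would substitute this generic $X_W$ into the defining condition $A^\dagger X_W A^\dagger = A^\dagger$ for $X_W\in\mathcal{A}^\dag\{1\}$. Expanding,
\begin{equation*}
A^\dagger X_W A^\dagger = A^\dagger B A^\dagger + A^\dagger W A^\dagger - A^\dagger B B^\dagger W B^\dagger B A^\dagger,
\end{equation*}
so $X_W\in\mathcal{A}^\dag\{1\}$ for every $W$ is precisely the identity in statement (b). Therefore: $\mathcal{B}^\dag\{1\}\subseteq\mathcal{A}^\dag\{1\}$ holds if and only if the identity of (b) holds for all $W$. Combined with Proposition \ref{propiedadesdiamond}(c), this gives the equivalence $(a)\Leftrightarrow(b)$ directly.

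The one point requiring a little care — and the main obstacle — is making sure that ranging over all $W\in\Cmn$ in the expression $X_W = B + W - BB^\dagger W B^\dagger B$ really does sweep out all of $\mathcal{B}^\dag\{1\}$, i.e. that the first representation in Theorem \ref{Amenos} applies with $B$ playing the role of the fixed inner inverse "$A^-$" of the matrix "$A$"$=B^\dagger$. For this I need $B\in\mathcal{B}^\dag\{1\}$, equivalently $B^\dagger B B^\dagger = B^\dagger$, which is immediate from the Moore–Penrose equations; then the form "$A^- + U - A^-AUAA^-$" becomes "$B + W - B B^\dagger W B^\dagger B$" after identifying $A^-\leftrightarrow B$, $A\leftrightarrow B^\dagger$, $U\leftrightarrow W$, and using $B B^\dagger = (B^\dagger)^{-}B^\dagger$ with the chosen inner inverse. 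Once this identification is checked, the rest is the routine expansion above, and no further structural input (no SVD, no explicit decomposition) is needed. I would also remark that the "$-$" sign and the symmetric placement of $B^\dagger W B^\dagger$ in the middle term of (b) are exactly what Theorem \ref{Amenos} dictates, which explains the otherwise mysterious shape of the formula.
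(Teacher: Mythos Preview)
Your proposal is correct and follows essentially the same route as the paper: both apply Theorem~\ref{Amenos} to $B^\dagger$ with $B$ as the fixed inner inverse to parametrize $\mathcal{B}^\dagger\{1\}$ as $\{B+W-BB^\dagger W B^\dagger B: W\in\Cmn\}$, and then identify condition (b) with the statement that every such element lies in $\mathcal{A}^\dagger\{1\}$, invoking Proposition~\ref{propiedadesdiamond}(c). Your version is arguably cleaner in that you treat the equivalence in one stroke, whereas the paper splits it into two implications and handles the ``arbitrariness of $W$'' slightly informally in the forward direction.
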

\begin{proof}
Suppose $A\,\leq^\diamond\,B$. From Proposition \ref{propiedadesdiamond}, $\mathcal B^\dagger\{1\}\,\subseteq\,\mathcal A^\dagger\{1\}$. Let $T\,\in\,\mathcal B^\dagger\{1\}$. Since $B \, \in \mathcal B^\dagger\{1\}$, by Theorem \ref{Amenos}, $T\,=\, B + W - B B^\dagger W B^\dagger B$, for some 
$W\,\in\,\mathbb{C}^{m\times n}$.  By hypothesis, $T\,\in \,\mathcal A^\dagger\{1\}$. By replacing the expression of $T$ in equality $A^\dagger T A^\dagger \,=\,A^\dagger$ we obtain
$$A^\dagger\,=\,A^\dagger(B + W - B B^\dagger W B^\dagger B)A^\dagger\,=\,A^\dagger B A^\dagger\,+\,A^\dagger W A^\dagger \,-\,A^\dagger B B^\dagger W B^\dagger B A^\dagger.$$ Note that the arbitrariness of $T$ implies that $W\,\in\,\Cmn$ is arbitrary as well.

Suppose now that $A^\dagger\,=\,A^\dagger B A^\dagger\,+\,A^\dagger W A^\dagger \,-\,A^\dagger B B^\dagger W B^\dagger B A^\dagger$ holds for arbitrary  $W\,\in\,\mathbb{C}^{m\times n}$. Let $G\,\in\,\mathcal B^\dagger\{1\}$. By Theorem \ref{Amenos}, $G\,=\, B + X - B B^\dagger X B^\dagger B$ for some $X \,\in\,\mathbb{C}^{m\times n}$. Hence, by using  the arbitrariness of $W$ in the hypothesis,
$$
A^\dagger G A^\dagger \,=\,A^\dagger(B + X - B B^\dagger X B^\dagger B)A^\dagger\,=\,A^\dagger B A^\dagger\,+\,A^\dagger X A^\dagger \,-\,A^\dagger B B^\dagger X B^\dagger B A^\dagger\,=\,A^\dagger.
$$ 
Thus, $G\,\in\,\mathcal A^\dagger\{1\}$. Hence, $A\,\leq^\diamond\,B$ by Proposition \ref{propiedadesdiamond}.
\end{proof}

We can derive some identities by considering different particular matrices $W$ in the previous theorem.
\begin{propo}\label{propMP}
Let $A$, $B$ $\in \,\Cmn$ such that $A\,\leq^\diamond\,B$. Then
\begin{enumerate}[(a)]
\item[{\rm (a)}] 
$A^\dagger\,=\,A^\dagger B A^\dagger$.
\item [{\rm (b)}] 
$A^\dagger\,=\,A^\dagger (A B^\dagger B) A^\dagger$. Thus, $A^\dagger A\,=\,A^\dagger A B^\dagger B A^\dagger A$ (i.e., $B^\dagger B$ is a \{1\}-inverse of $A^\dagger A$). Also, $A A^\dagger\,=\,A B^\dagger B A^\dagger$. 
\item [{\rm (c)}] 
$A^\dagger\,=\,A^\dagger (B B^\dagger A B^\dagger B) A^\dagger$.
\item [{\rm (d)}] 
$(A^\dagger)^*\,=\,(A^\dagger)^* B^\dagger B A^* B B^\dagger (A^\dagger)^*$.
\item [{\rm (e)}] 
$A^\dagger\,=\,A^\dagger (B B^\dagger A) A^\dagger$. Thus, $A A^\dagger \,=\,A A^\dagger  B B^\dagger A A^\dagger$ (i.e., $B B^\dagger$ is a \{1\}-inverse of $A A^\dagger$). Also $A^\dagger A \,=\,A^\dagger B B^\dagger A $.
\end{enumerate}
\end{propo}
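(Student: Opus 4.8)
The plan is to feed particular choices of $W$ into the identity of Theorem \ref{caractdiamondA+} and to exploit that $A \leq^\diamond B$ forces $A \prec^s B$, i.e.\ $\Ra(A) \subseteq \Ra(B)$ and $\Ra(A^*) \subseteq \Ra(B^*)$. Because $BB^\dagger$ and $B^\dagger B$ are the orthogonal projectors onto $\Ra(B)$ and $\Ra(B^*)$, these two inclusions are equivalent to the two basic identities
\begin{equation*}
BB^\dagger A = A \qquad \text{and} \qquad A B^\dagger B = A ,
\end{equation*}
and together with the hermiticity of $BB^\dagger$ and $B^\dagger B$ and the Moore--Penrose identity $A^\dagger A A^\dagger = A^\dagger$ these will do most of the work.

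For (a) I would set $W = 0$ in Theorem \ref{caractdiamondA+}: the last two summands vanish and $A^\dagger = A^\dagger B A^\dagger$ follows at once. For (b) I would substitute $AB^\dagger B = A$, so that $A^\dagger(AB^\dagger B)A^\dagger = A^\dagger A A^\dagger = A^\dagger$; left- and right-multiplying $AB^\dagger B = A$ by $A^\dagger$ then gives $A^\dagger A B^\dagger B = A^\dagger A$, hence $A^\dagger A B^\dagger B A^\dagger A = (A^\dagger A)^2 = A^\dagger A$, and $AB^\dagger B A^\dagger = AA^\dagger$. Symmetrically, for (e) the identity $BB^\dagger A = A$ yields $A^\dagger(BB^\dagger A)A^\dagger = A^\dagger A A^\dagger = A^\dagger$; multiplying $BB^\dagger A = A$ by $AA^\dagger$ on the left gives $AA^\dagger BB^\dagger A = A$, hence $AA^\dagger BB^\dagger AA^\dagger = AA^\dagger$, and multiplying by $A^\dagger$ on the left gives $A^\dagger BB^\dagger A = A^\dagger A$.

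For (c) there are two equally short routes: either note $BB^\dagger A B^\dagger B = (BB^\dagger A)B^\dagger B = AB^\dagger B = A$, so $A^\dagger(BB^\dagger A B^\dagger B)A^\dagger = A^\dagger A A^\dagger = A^\dagger$; or take $W = A$ in Theorem \ref{caractdiamondA+}, observe that its middle summand equals $A^\dagger A A^\dagger = A^\dagger$, and use (a) to cancel, which leaves exactly $A^\dagger = A^\dagger(BB^\dagger A B^\dagger B)A^\dagger$. Finally, (d) is the conjugate transpose of (c): since $BB^\dagger$ and $B^\dagger B$ are hermitian and $(A^\dagger A A^\dagger)^* = (A^\dagger)^*$, transposing $A^\dagger = A^\dagger(BB^\dagger A B^\dagger B)A^\dagger$ gives precisely $(A^\dagger)^* = (A^\dagger)^* B^\dagger B A^* B B^\dagger (A^\dagger)^*$.

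I do not expect a real obstacle: the computations are routine. The mild points of care are, in (d), tracking how the hermitian projectors and the identity $A^\dagger A A^\dagger = A^\dagger$ behave under conjugate transposition, and noticing that (a) is the only item genuinely requiring the full diamond hypothesis (through Theorem \ref{caractdiamondA+}), whereas (b)--(e) follow from the space-preorder consequences $BB^\dagger A = A$ and $AB^\dagger B = A$, with (a) giving a quick alternative route to (c).
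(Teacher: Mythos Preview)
Your proposal is correct. For (a), (d), and your second route to (c) you follow exactly the paper's argument: the paper also sets $W=0$ for (a), $W=A$ for (c), and obtains (d) as the conjugate transpose of (c).

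The genuine difference is in (b) and (e) (and your first route to (c)). The paper stays entirely inside Theorem \ref{caractdiamondA+}, choosing $W=BA^\dagger A$ for (b) and $W=AA^\dagger B$ for (e), then using (a) to simplify. You instead invoke the space-preorder consequences $BB^\dagger A=A$ and $AB^\dagger B=A$ and reduce everything to $A^\dagger A A^\dagger=A^\dagger$. Both are short; your route is arguably more transparent and, as you correctly note, isolates that only (a) really needs the full diamond hypothesis while (b)--(e) depend just on $A\prec^s B$. The paper's route has the minor advantage of being self-contained within the machinery of Theorem \ref{caractdiamondA+}, without appealing separately to the projector identities.
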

\begin{proof}
Item (a) follows by setting $W\,=\,0$ in Theorem \ref{caractdiamondA+}. By using this expression in (a) and particularizing $W\,=\,B A^\dagger A$, $W\,=\,A$, and $W\,=\,A A^\dagger B$, respectively, and by making some computations we get (b), (c), and (e). Item (d) follows immediatelly from (c).
\end{proof}

Next, we give two characterizations for the diamond partial order. They do not use geometrical approaches but only algebraic equations.
\begin{thm}\label{caractdiamond}
Let $A$, $B$ $\in \,\Cmn$. The following statements are equivalent:
\begin{enumerate}[(a)]
\item[{\rm (a)}]
$A\,\leq^\diamond\,B$.
\item[{\rm (b)}]
$A^\dagger\,=\,A^\dagger B A^\dagger$ \qquad and \qquad
$A\,=\,B B^\dagger A\,=\,A B^\dagger B$.
\item[{\rm (c)}]
$A^\dagger\,=\,A^\dagger B A^\dagger\,=\,A^\dagger B B^\dagger\,=\,B^\dagger B A^\dagger$.
\end{enumerate}
\end{thm}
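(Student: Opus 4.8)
The natural strategy is to establish the cycle of implications $(a)\Rightarrow(b)\Rightarrow(c)\Rightarrow(a)$, leaning on the already-proved Theorem~\ref{caractdiamondA+}, Proposition~\ref{propMP}, and Proposition~\ref{propiedadesdiamond}. For $(a)\Rightarrow(b)$ there is almost nothing to do: Proposition~\ref{propMP}(a) gives $A^\dagger=A^\dagger B A^\dagger$ directly, and Proposition~\ref{propMP}(b) and (e) give $A=A B^\dagger B$ and $A=B B^\dagger A$ after stripping off one factor. (In fact, in part (b) of that proposition the displayed identity $A A^\dagger = A B^\dagger B A^\dagger$ can be massaged: multiplying on the right by $A$ and using $A^\dagger A$ as a right identity on $R(A^*)$ yields $A = A B^\dagger B$; similarly from (e) one gets $A = B B^\dagger A$. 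I would instead go back to Proposition~\ref{propMP}(b),(e) and note that the relations $A^\dagger = A^\dagger(AB^\dagger B)A^\dagger$ and $A^\dagger=A^\dagger(BB^\dagger A)A^\dagger$ combined with $AA^\dagger A = A$ give the cleaner $A=AB^\dagger B$ and $A=BB^\dagger A$ after left/right multiplication by $A$ and cancellation of the resulting projectors.)

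For $(b)\Rightarrow(c)$ I would start from $A=BB^\dagger A$ and $A=AB^\dagger B$, take Moore--Penrose inverses, and exploit Proposition~\ref{propA+0}(c): since $BB^\dagger$ is a hermitian idempotent, the product rule $(BB^\dagger A)^\dagger = A^\dagger (BB^\dagger)^\dagger = A^\dagger BB^\dagger$ applies once one checks the hermiticity condition $(BB^\dagger)^*(BB^\dagger)^* A A^* $—equivalently $BB^\dagger AA^*$—is hermitian, which follows from $A=BB^\dagger A$ (so $AA^* = BB^\dagger AA^*$, and $AA^*$ is hermitian). Hence $A^\dagger = A^\dagger BB^\dagger$. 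Symmetrically, from $A=AB^\dagger B$ one gets $A^\dagger = B^\dagger B A^\dagger$. Finally $A^\dagger = A^\dagger BB^\dagger = A^\dagger B(B^\dagger B)B^\dagger$; substituting $B^\dagger B A^\dagger = A^\dagger$ (hmm, that is not quite the right factor) — more carefully: $A^\dagger = A^\dagger BB^\dagger$ and $A^\dagger = B^\dagger B A^\dagger$ together give $A^\dagger = B^\dagger B A^\dagger B B^\dagger$, and one still needs $A^\dagger = A^\dagger B A^\dagger$, which is already hypothesis (b). So (c) is assembled from the two one-sided identities just derived plus the hypothesis.

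For $(c)\Rightarrow(a)$ I would show (c) implies hypothesis (b) of Theorem~\ref{caractdiamondA+}, or more directly that $\mathcal B^\dagger\{1\}\subseteq\mathcal A^\dagger\{1\}$ via Proposition~\ref{propiedadesdiamond}(c). Take $T\in\mathcal B^\dagger\{1\}$; by Theorem~\ref{Amenos} write $T = B + W - BB^\dagger W B^\dagger B$ for arbitrary $W$. Then $A^\dagger T A^\dagger = A^\dagger B A^\dagger + A^\dagger W A^\dagger - A^\dagger BB^\dagger W B^\dagger B A^\dagger$, and using $A^\dagger BB^\dagger = A^\dagger$ and $B^\dagger B A^\dagger = A^\dagger$ from (c) the last term collapses to $A^\dagger W A^\dagger$, while the first term is $A^\dagger$ again by (c); hence $A^\dagger T A^\dagger = A^\dagger$, so $T\in\mathcal A^\dagger\{1\}$ and $A\leq^\diamond B$.

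\textbf{Main obstacle.} The only genuinely delicate point is the step $A=BB^\dagger A\Rightarrow A^\dagger = A^\dagger BB^\dagger$ (and its mirror): one must verify the hermiticity hypothesis in Proposition~\ref{propA+0}(c) before applying the reverse-order law. This is where I would be careful to record that $A=BB^\dagger A$ forces $AA^* = BB^\dagger AA^* = BB^\dagger AA^*(BB^\dagger)^*$ so that $BB^\dagger AA^* = (BB^\dagger AA^*)^*$, giving exactly the hermiticity needed. Everything else is bookkeeping with the hermitian idempotents $AA^\dagger$, $A^\dagger A$, $BB^\dagger$, $B^\dagger B$ and the identities $AA^\dagger A = A$, $A^\dagger A A^\dagger = A^\dagger$.
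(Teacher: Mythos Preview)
Your argument is correct overall, and your $(b)\Rightarrow(c)$ step is exactly the paper's. Two points deserve comment.

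For $(a)\Rightarrow(b)$, your route through Proposition~\ref{propMP}(b),(e) does not quite close on its own: from $AA^\dagger=AB^\dagger BA^\dagger$ you obtain only $A=AB^\dagger B\,A^\dagger A$, and ``cancelling'' the trailing $A^\dagger A$ requires $\mathcal R\bigl((AB^\dagger B)^*\bigr)\subseteq\mathcal R(A^*)$, which is not a consequence of that projector identity alone. The paper bypasses this by using the range inclusions in the definition of $\leq^\diamond$ directly: $\mathcal R(A)\subseteq\mathcal R(B)$ gives $A=BX$ for some $X$, whence $BB^\dagger A=BB^\dagger BX=BX=A$, and symmetrically $A=AB^\dagger B$. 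This is both shorter and logically prior to Proposition~\ref{propMP}; you should use it.

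For $(c)\Rightarrow(a)$, your argument is genuinely different from the paper's and arguably cleaner. The paper instead proves $(c)\Rightarrow(b)$ by re-applying the reverse-order law (now to $A^\dagger=A^\dagger BB^\dagger$ and its mirror, checking that $(A^\dagger)^*A^\dagger BB^\dagger$ is hermitian) and then $(b)\Rightarrow(a)$ by verifying the defining conditions of $\leq^\diamond$ directly, computing $AB^*A=(A^*A\,A^\dagger BA^\dagger\,AA^*)^*=(A^*AA^\dagger AA^*)^*=AA^*A$. Your route via $\mathcal B^\dagger\{1\}\subseteq\mathcal A^\dagger\{1\}$ and Proposition~\ref{propiedadesdiamond}(c) collapses in one line once $A^\dagger BB^\dagger=A^\dagger=B^\dagger BA^\dagger$ are available, and it recycles the mechanism of Theorem~\ref{caractdiamondA+} rather than returning to the definition. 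Both work; yours avoids a second invocation of the reverse-order law, while the paper's has the merit of exhibiting $(b)\Leftrightarrow(c)$ explicitly.
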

\begin{proof}
(a) $\Rightarrow$ (b) Since $\mathcal{R}(A)\,\subseteq\,\mathcal{R}(B)$, there exists a matrix $X$ such that $A\,=\,BX$. Premultiplying by $BB^\dagger$ we obtain $BB^\dagger A\,=\,BB^\dagger BX\,=\,BX\,=\,A$. Analogously, from $\mathcal{R}(A^*)\,\subseteq\,\mathcal{R}(B^*)$ we obtain $AB^\dagger B\,=\,A$. The equality $A^\dagger\,=\,A^\dagger B A^\dagger$ was proved in Proposition \ref{propMP}(a).

(b) $\Rightarrow$ (c)  By hypothesis, we have $A=A B^\dag B$. Then
$A^*A(B^\dag B)(B^\dag B)^* = A^* A B^\dag B = A^* A$, 
which is a hermitian matrix. Hence, by Proposition \ref{propA+0} (\ref{MP_distributiva}), $(A(B^\dag B))^\dag = (B^\dag B)^\dag A^\dag.$ Analogously, from $A=B B^\dag A$, we obtain $((BB^\dag)A)^\dag= A^\dag(B B^\dag)^\dag.$ Thus, by Proposition \ref{propA+0} (\ref{A_proj_orto_MP}),
$A^\dag=((B B^\dag) A)^\dag = A^\dag (B B^\dag)^\dag = A^\dag B B^\dag$ and $A^\dag=(A (B^\dag B))^\dag = (B^\dag B)^\dag A^\dag  = B^\dag B A^\dag.$

(c) $\Rightarrow$ (b) From $A^\dag=A^\dag B B^\dag$, we have that $(A^\dag)^*A^\dag(BB^\dag )(BB^\dag)^* = (A^\dag)^* A^\dag (B B^\dag)= (A^\dag)^* A^\dag$, which is a hermitian matrix. By Proposition \ref{propA+0}, 
$A=(A^\dag(BB^\dag))^\dag = (B B^\dag)^\dag A = B B^\dag A.$
Analogously,  we get $((B^\dag B)A^\dag)^\dag= A B^\dag B$ from $A^\dag=B^\dag B A^\dag$. So, 
$A = (A^\dag(B B^\dag))^\dag = B B^\dag A$ and
$A = ((B^\dag B)A^\dag)^\dag = A B^\dag B.$

(b) $\Rightarrow$ (a) From $A=B(B^\dag A)$ and $A=(AB^\dag)B$ we have 
$\mathcal{R}(A)  \subseteq \mathcal{R}(B)$ and $\mathcal{R}(A^*) \subseteq \mathcal{R}(B^*).$ 
Using $A^\dag=A^\dag B A^\dag$ and Proposition \ref{propA+0} we obtain 
$$
A B^* A = ((A B^* A)^*)^* = (A^* A (A^\dag B A^\dag) A A^*)^* = (A^* A A^\dag A A^*)^* = A A^* A,
$$
which completes the proof.
\end{proof}

Theorem \ref{descdiamond} allows us to characterize the successors of a matrix $A$ under the diamond order from a SVD of $A^\dag$. 
In what follows, we obtain the predecessors of a matrix $B$ under the diamond order from a SVD of $B$. 
\begin{thm}\label{predecdiamond}
Let $B \in \mathbb C^{m \times n}$ written as
\begin{equation}\label{SVD_B}
B= U\left( \begin{array}{cc}
D_b&0\\
0&0
\end{array}
\right) V^*,
\end{equation} where $U \in\mathbb{C}^{m \times m}$ and $V\in\mathbb{C}^{n \times n}$ are unitary matrices and $D_b \in \mathbb C^{b \times b}$ is a positive definite diagonal matrix. The following conditions are equivalent:
\begin{enumerate}[(a)]
\item[{\rm (a)}] There exists $A \in {\mathbb C}^{m \times n}$ such that $A\,\leq^\diamond\,B$.
\item[{\rm (b)}] The expression for $A \in {\mathbb C}^{m \times n}$ is
\[
A= U \left( \begin{array}{cc}
T&0\\
0&0
\end{array}
\right) V^*
\] with $T \in \C^{b \times b}$ and $T \,\leq^\diamond\,D_b$.
\end{enumerate} 
\end{thm}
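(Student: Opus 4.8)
The plan is to prove the equivalence by two direct implications, the key observation being that the geometric part of $A\leq^\diamond B$ (namely $A\prec^s B$) already forces $A$ to be block‑diagonal in the same unitary basis as the SVD \eqref{SVD_B} of $B$, after which the remaining condition $AB^*A=AA^*A$ transfers verbatim to the leading $b\times b$ blocks.

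For $(a)\Rightarrow(b)$, I would start from $A\leq^\diamond B$ and first use only $A\prec^s B$. Since $\Ra(B)=U(\C^b\times\{0\})$ and $\Ra(B^*)=V(\C^b\times\{0\})$ by \eqref{SVD_B}, the inclusion $\Ra(A)\subseteq\Ra(B)$ forces the last $m-b$ rows of $U^*A$ to vanish, while $\Ra(A^*)\subseteq\Ra(B^*)$ forces the last $n-b$ columns of $AV$ to vanish; combining the two, $U^*AV={\rm diag}(T,0)$ for a unique $T\in\C^{b\times b}$, that is, $A=U\,{\rm diag}(T,0)\,V^*$. Plugging the expressions for $A$ and $B$ into $AB^*A=AA^*A$ and cancelling the unitary factors leaves $TD_b^*T=TT^*T$. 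Finally, since $D_b$ is nonsingular we have $\Ra(D_b)=\Ra(D_b^*)=\C^b$, so $T\prec^s D_b$ holds automatically; together with $TD_b^*T=TT^*T$ this is exactly $T\leq^\diamond D_b$, which is $(b)$.

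For $(b)\Rightarrow(a)$ I would just run this backwards: with $A=U\,{\rm diag}(T,0)\,V^*$ the inclusions $\Ra(A)\subseteq\Ra(B)$ and $\Ra(A^*)\subseteq\Ra(B^*)$ are immediate from the block shapes, so $A\prec^s B$; and the same one‑line computation gives $AB^*A=U\,{\rm diag}(TD_b^*T,0)\,V^*$ and $AA^*A=U\,{\rm diag}(TT^*T,0)\,V^*$, which coincide because $TD_b^*T=TT^*T$ is the $\diamond$‑part of $T\leq^\diamond D_b$. Hence $A\leq^\diamond B$.

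The only delicate point is the first step of $(a)\Rightarrow(b)$: one must use both halves of $A\prec^s B$ — the range condition to kill the bottom rows and the condition on the range of the adjoint to kill the right‑hand columns — so that the two zero patterns line up and $U^*AV$ is genuinely block‑diagonal with a single nonzero $b\times b$ block. Everything else is bookkeeping with the unitary conjugations together with the fact, used twice, that a nonsingular $D_b$ makes the space preorder $\prec^s$ vacuous among $b\times b$ matrices. One could alternatively invoke Proposition \ref{propiedadesdiamond}(e) at the outset to reduce to the case $U=I_m$, $V=I_n$.
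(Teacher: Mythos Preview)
Your proof is correct and follows essentially the same approach as the paper: use the two range inclusions in $A\prec^s B$ to force $U^*AV={\rm diag}(T,0)$, and then read off $T\leq^\diamond D_b$ from the remaining condition. The paper invokes Proposition~\ref{propiedadesdiamond}(e) (unitary invariance of $\leq^\diamond$) at the outset---exactly the alternative you mention in your last sentence---and is a bit terser at the final step, while you compute $AB^*A$ and $AA^*A$ explicitly and note that $T\prec^s D_b$ is vacuous because $D_b$ is nonsingular; but the underlying argument is the same.
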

\begin{proof}
(a) $\Rightarrow$ (b) Suppose that $A\,\leq^\diamond\,B$. Let $
A= U \left( \begin{array}{cc}
T&E\\
F&G
\end{array}
\right) V^*
$ be partitioned accordingly to the sizes of the blocks of $B$. 
From Proposition \ref{propiedadesdiamond}, we have
$$\left( \begin{array}{cc}
T&E\\
F&G
\end{array}
\right)\,\leq^\diamond \,\left( \begin{array}{cc}
D_b&0\\
0&0
\end{array}
\right).$$ 

Since $\mathcal{R}(A)\,\subseteq\,\mathcal{R}(B)$, there exists a matrix $X$ such that $A\,=\,BX$. Then
$$
U \left( \begin{array}{cc}
T&E\\
F&G
\end{array}
\right) V^*\,=\,U \left( \begin{array}{cc}
D_b&0\\
0&0
\end{array}
\right) V^*X,
$$ 
from which
$$
\left( \begin{array}{cc}
T&E\\
F&G
\end{array}
\right)\,=\,\left( \begin{array}{cc}
D_b&0\\
0&0
\end{array}
\right) V^*X V.
$$

Let $V^*X V\,=\,\left( \begin{array}{cc}
R_1&R_2\\
R_3&R_4
\end{array}
\right)$. Then,
$$
\left( \begin{array}{cc}
T&E\\
F&G
\end{array}
\right)\,=\,\left( \begin{array}{cc}
D_b&0\\
0&0
\end{array}
\right) \left( \begin{array}{cc}
R_1&R_2\\
R_3&R_4
\end{array}
\right)\,=\,\left( \begin{array}{cc}
D_bR_1&D_bR_2\\
0&0
\end{array}
\right).
$$ 
Hence, $F=0$ and $G=0$. Similary, from $\mathcal{R}(A^*)\,\subseteq\,\mathcal{R}(B^*)$, we obtain $E=0$. 

Thus, $A= V \left( \begin{array}{cc}
T&0\\
0&0
\end{array}
\right) U^*$ and
$\left( \begin{array}{cc}
T&0\\
0&0
\end{array}
\right)\,\leq^\diamond \,\left( \begin{array}{cc}
D_b&0\\
0&0
\end{array}
\right)$. By definition, $T\,\leq^\diamond\,D_b$.

(b) $\Rightarrow$ (a)
Since $T\,\leq^\diamond\,D_b$, an easy computation shows that $\left( \begin{array}{cc}
T&0\\
0&0
\end{array}
\right)\,\leq^\diamond \,\left( \begin{array}{cc}
D_b&0\\
0&0
\end{array}
\right)$. From Proposition \ref{propiedadesdiamond}, $A\,\leq^\diamond\,B$.
\end{proof}

\begin{rem}{\rm
Under the assumptions and notations of Theorem \ref{predecdiamond}, it is easy to see that the following statements are true:
\begin{enumerate}[(a)]
\item[{\rm (a)}] 
If $T\,\leq^\diamond \,D_b$ and $T$ is nonsingular, then $T\,=\,D_b$.
\item[{\rm (b)}] 
$\left\{A\,\in \,\mathbb C^{m \times n}: A\,\leq^\diamond\,B\right\}= \left\{  U \left( \begin{array}{cc}
(XD_b^{-1})^\dag&0\\
0&0
\end{array}
\right) V^*\,:\,X^2=X, X \in \,\mathbb C^{b \times b}\right\}.$ In fact, from $T \,\leq^\diamond\, D_b$, by Proposition \ref{propiedadesdiamond}, we have that 
$T^\dag \,\leq^-\, D_b^\dag=D_b^{-1}$.
Since $D_b^{-1}=I_bD_b^{-1}$ is a trivial full-rank factorization of $D_b^{-1}$, by Proposition \ref{caractordenmenosidempotentes}, there exists an idempotent matrix $X \in \,\mathbb C^{b \times b}$ such that $T^\dagger\,=I_b\,X D_b^{-1}$, from where $T\,=\,(X D_b^{-1})^\dag$ with $X^2=X$. 
\end{enumerate}}
\end{rem}
\section{Partial orders defined from 1MP- and MP1-inverses}\label{Sec4}

Recently, the authors introduced in \cite{HeLaTh01} a new class of generalized inverses of rectangular matrices, the 1MP- and MP1-inverses. 
An extension to rings and applications can be found in \cite{MaMoCr,RaLj}.
For each $A^- \in \A\{1\}$, it was defined 
$$
A^{-\dagger}:=A^- A A^\dagger \in \mathbb C^{n \times m} \qquad \text{ and } \qquad A^{\dagger -}:= A^\dagger A A^-  \in \mathbb C^{n \times m}.
$$

The symbols $\mathcal{A}{\{-\dagger \}}$ and $\mathcal{A}{\{\dagger - \}}$ stands for the set of all 1MP- and MP1-inverses of $A$, respectively. Moreover,
$$\A{\{-\dagger\}}=\{A^-AA^\dagger:  A^- \in \A\{1\}\}\quad \text{ and }\quad \A{\{\dagger -\}}=\{A^\dagger A A^-:  A^- \in \A\{1\}\}.$$

The following result provides some characterizations of the diamond order in terms of the orthogonal projectors $AA^\dag$ and $A^\dag A$ and of the 1MP- and MP1-inverses.

\begin{thm}\label{diamondcon1MPMP1}
Let  $A, B \in \,\mathbb C^{m \times n}$. The following conditions are equivalent:
\begin{enumerate}[(a)]
\item[{\rm (a)}]
$A\,\leq^\diamond \,B$.
\item [{\rm{(b)}}]
$B^\dagger B A^\dagger A \,=\,A^\dagger B A^\dagger A \,=\,A^\dagger A \quad \text{and} \quad A A^\dagger B B^\dagger \,=\, A A^\dagger B A^\dagger \,=\, A A^\dagger$.
\item [{\rm{(c)}}]
$B^\dagger B A^{\dagger -} \,=\,A^\dagger B A^{\dagger -}\,=\,A^{\dagger -} \quad \text{and} \quad A^{- \dagger} B B^\dagger\,=\,A^{- \dagger} B A^\dagger\,=\,A^{- \dagger}$ for some $A^- \in \mathcal A\{1\}$.
\item [{\rm{(d)}}]
$B^\dagger B A^{\dagger -} \,=\,A^\dagger B A^{\dagger -}\,=\,A^{\dagger -} \quad \text{and} \quad A^{- \dagger} B B^\dagger\,=\,A^{- \dagger} B A^\dagger\,=\,A^{- \dagger}$ for all $A^- \in \mathcal A\{1\}$.
\end{enumerate}
\end{thm}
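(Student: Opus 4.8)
The plan is to prove the cycle $(a)\Rightarrow(b)\Rightarrow(c)\Rightarrow(d)\Rightarrow(a)$, leaning heavily on Theorem \ref{caractdiamond} and Proposition \ref{propMP}, which already repackage the diamond order as purely algebraic identities among $A^\dagger$, $B^\dagger$, and the orthogonal projectors.

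First I would establish $(a)\Rightarrow(b)$. Assume $A\,\leq^\diamond\,B$. From Theorem \ref{caractdiamond}(b)--(c) we have $A=BB^\dagger A=AB^\dagger B$ and $A^\dagger=A^\dagger BB^\dagger=B^\dagger BA^\dagger$. Taking Moore--Penrose inverses of $A=AB^\dagger B$ and $A=BB^\dagger A$ (as in the proof of Theorem \ref{caractdiamond}) also yields $AA^\dagger=AB^\dagger BA^\dagger$ type relations; more directly, post/pre-multiplying the projector identities gives what we want. Concretely: $A^\dagger A = (B^\dagger B A^\dagger) A = B^\dagger B A^\dagger A$, which is the first equality in the left-hand pair; and since $A=AB^\dagger B$ implies (premultiplying by $A^\dagger$) $A^\dagger A = A^\dagger A B^\dagger B$, combined with Proposition \ref{propMP}(b) $AA^\dagger=AB^\dagger BA^\dagger$ one extracts $A^\dagger BA^\dagger A$; here I would use $A^\dagger B A^\dagger = A^\dagger$ from Proposition \ref{propMP}(a) so that $A^\dagger B A^\dagger A = A^\dagger A$. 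The right-hand pair $AA^\dagger BB^\dagger = AA^\dagger BA^\dagger = AA^\dagger$ follows symmetrically from $A^\dagger = A^\dagger BB^\dagger$ and $A^\dagger B A^\dagger = A^\dagger$ by premultiplying by $A$. This step is essentially bookkeeping with the already-available identities.

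Next, $(b)\Rightarrow(c)$ and the equivalence of $(c)$ and $(d)$. Recall $A^{\dagger-}=A^\dagger A A^-$ and $A^{-\dagger}=A^- A A^\dagger$ for any chosen $A^-\in\A\{1\}$. The key observation is that $A A^- A=A$ gives $A^\dagger A A^- A A^\dagger = A^\dagger A A^\dagger = A^\dagger$ (this is the defining "absorption" property making $A^{\dagger-}$ and $A^{-\dagger}$ behave like $A^\dagger$ against the relevant projectors). So from $(b)$, right-multiplying the identity $B^\dagger B A^\dagger A = A^\dagger A$ by $A^-$ and inserting the definition of $A^{\dagger-}$ yields $B^\dagger B A^{\dagger-} = A^{\dagger-}$, and similarly $A^\dagger B A^{\dagger-} = A^{\dagger-}$; the $A^{-\dagger}$ identities come by left-multiplying the right-hand pair of $(b)$ by $A^-$. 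Since these derivations never used any special property of $A^-$ beyond $AA^-A=A$, the resulting identities hold \emph{for all} $A^-\in\A\{1\}$, which is exactly $(d)$; and $(d)\Rightarrow(c)$ is trivial (the class $\A\{1\}$ is nonempty). The only subtlety is checking that the definitions $A^{\dagger-}=A^\dagger AA^-$, $A^{-\dagger}=A^-AA^\dagger$ slot in cleanly—e.g. that $A^\dagger A\cdot A^\dagger AA^- = A^\dagger AA^- = A^{\dagger-}$ since $A^\dagger A$ is idempotent.

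Finally, $(c)\Rightarrow(a)$, which I expect to be the main obstacle, since we must recover the full diamond order from identities involving only one inner inverse. Fix the $A^-$ for which $(c)$ holds. From $A^{- \dagger} B B^\dagger = A^{- \dagger}$, i.e. $A^- A A^\dagger B B^\dagger = A^- A A^\dagger$, I would premultiply by $A$ and use $A A^- A = A$ to get $A A^\dagger B B^\dagger = A A^\dagger$; likewise $B^\dagger B A^{\dagger -} = A^{\dagger -}$ premultiplied appropriately (here postmultiply the transposed-structure identity, or premultiply $A^\dagger A A^- B^\dagger B\cdots$—care is needed since $A^{\dagger-}$ has $A^-$ on the right) yields $B^\dagger B A^\dagger A = A^\dagger A$ after right-multiplying by $A$ and using $AA^-A=A$: indeed $B^\dagger B A^\dagger A A^- A = A^\dagger A A^- A$ gives $B^\dagger B A^\dagger A = A^\dagger A$. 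Then from $A^\dagger B A^{\dagger-}=A^{\dagger-}$ multiply on the right by $A$ to obtain $A^\dagger B A^\dagger A = A^\dagger A$, and combined with $B^\dagger B A^\dagger A = A^\dagger A$ we recover the left-hand pair of $(b)$; symmetrically the $A^{-\dagger}$ identities give the right-hand pair. Thus $(c)\Rightarrow(b)$. To close the loop I then show $(b)\Rightarrow(a)$: from $B^\dagger B A^\dagger A = A^\dagger A$ premultiply by $A$ to get $AB^\dagger BA^\dagger A = A$, then postmultiply by $A^\dagger A$ (absorbed) and use that $A A^\dagger A = A$ cleverly—actually cleaner is to note $A^\dagger A = B^\dagger B A^\dagger A$ means $\Ra((A^\dagger A)^*)=\Ra(A^*)\subseteq\Ra(B^*)$, and dually $AA^\dagger = AA^\dagger BB^\dagger$ gives $\Ra(A)\subseteq\Ra(B)$, so $A\,\prec^s\,B$; and $A^\dagger B A^\dagger A = A^\dagger A$ together with $AA^\dagger BA^\dagger = AA^\dagger$ can be massaged (multiply the first by $A$ on the left, the second by $A$ on the right, compare) into $A^\dagger B A^\dagger = A^\dagger$, whence Theorem \ref{caractdiamond}(b)'s remaining identity $A=BB^\dagger A=AB^\dagger B$ follows from the range inclusions, giving $A\,\leq^\diamond\,B$ via the $ABA=$ computation at the end of Theorem \ref{caractdiamond}'s proof. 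The delicate point throughout is the asymmetry of $A^{\dagger-}$ versus $A^{-\dagger}$ (where $A^-$ sits), so one must multiply on the correct side and invoke $AA^-A=A$ to cancel it; I would state once, as a preliminary observation, that $A A^{\dagger-} = A A^\dagger$ and $A^{-\dagger} A = A^\dagger A$, after which all the manipulations become routine.
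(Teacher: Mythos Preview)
Your approach is essentially the paper's: $(a)\Leftrightarrow(b)$ is reduced to Theorem~\ref{caractdiamond}, and $(b)\Leftrightarrow(c)\Leftrightarrow(d)$ is handled by multiplying on the appropriate side and using the absorption of $A^-$ via $AA^-A=A$, which is exactly the paper's one-line observation that $A^{\dagger-}A=A^\dagger A$ and $AA^{-\dagger}=AA^\dagger$. One slip to fix: your closing ``preliminary observation'' has these identities stated the wrong way around---in fact $AA^{\dagger-}=AA^-$ and $A^{-\dagger}A=A^-A$, while the identities you actually need (and correctly use in the body of your argument) are $A^{\dagger-}A=A^\dagger A$ and $AA^{-\dagger}=AA^\dagger$.
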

\begin{proof} 
(a) $\Leftrightarrow$ (b) It is an immediate consequence of Theorem \ref{caractdiamond}. The other equivalences follow from the equalities $A^\dag A = A^{\dag -}A$ and $A A^\dag = A A^{-\dag}$, for any $A^- \in \A\{1\}$.
\end{proof}

The star order is defined for complex rectangular matrices (see \cite{MiBhMa}). Given $A, B \in \Cmn$, it is said that:
$$A\,\leq^*\,B\, (A \text{ is below } B \text{ under the star order) if } AA^*\,=\,BA^* \quad \mbox{and}\quad A^*A\,=\,A^*B.$$ If only one of the two conditions of definition is considered, other two matrix orders are obtained by adding some conditions. It is well known that:
\begin{itemize}
\item $A\,*\leq \,B$ ($A$ is below $B$ under left star order) if $A^*A\,=\,A^*B \quad \mbox{and} \quad {\Ra}(A)\,\subseteq \,{\Ra}(B)$;
\item $A\, \leq* \,B$ ($A$ is below $B$ under right star order) if $AA^*\,=\,BA^* \quad \mbox{and} \quad {\Ra}(A^*)\,\subseteq\,{\Ra}(B^*)$. 
\end{itemize}
Also, it is known that $A^*$ can be replaced by $A^\dagger$ in the equalities involved in the previous definitions. Some related results for these and other one-sided partial orders can be found in \cite{CvMoWe,DoMa,ZhPa}.  

The one-sided star orders can be considered as ${\cal G}$-based partial orders by considering the adequate generalized inverses, namely, 1MP- and MP1-inverses.
\begin{thm} \label{estizq} Let $A, B\in \Cmn$. Then 
\begin{enumerate}
\item [{\rm (a)}] $A\,*\leq\,B \qquad$ iff $\qquad A^{- \dag}A=A^{- \dag}B \quad \text{and} \quad AA^{- \dag}=BA^{- \dag}$, for some $A^{- \dag}\in \mathcal{A}{\{-\dagger \}}$. 
\item [{\rm (b)}] $A\,\leq*\,B \qquad$ iff $\qquad A^{\dag -}A=A^{\dag -}B \quad \text{and} \quad AA^{\dag -}=BA^{\dag -}$, for some $A^{\dag-}\in \mathcal{A}{\{\dagger -\}}$.
\end{enumerate}
\end{thm}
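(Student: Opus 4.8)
The plan is to prove part (a); part (b) then follows by the same argument applied to conjugate transposes, or by a symmetric computation, so I would just remark on that at the end. Throughout I will use the fact recorded earlier that in the definition of the left star order one may replace $A^*$ by $A^\dagger$, i.e.\ $A\,*\leq\,B$ iff $A^\dagger A=A^\dagger B$ and $\mathcal R(A)\subseteq\mathcal R(B)$. The key structural identity I will lean on is $A^{-\dagger}A=A^\dagger A A^\dagger A=A^\dagger A=AA^\dagger A$ together with $AA^{-\dagger}=AA^\dagger A A^\dagger=AA^\dagger$, valid for every $A^-\in\mathcal A\{1\}$ (both are immediate from $A^{-\dagger}=A^-AA^\dagger$ and the Moore--Penrose equations); in particular $AA^{-\dagger}$ does not depend on the choice of $A^-$, and $A^{-\dagger}A=A^\dagger A$.

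For the forward implication, assume $A\,*\leq\,B$. First I would show $AA^{-\dagger}=BA^{-\dagger}$ \emph{for every} $A^-$: since $AA^{-\dagger}=AA^\dagger$, it suffices to check $BA^-AA^\dagger=AA^\dagger$. From $\mathcal R(A)\subseteq\mathcal R(B)$ we get $A=BB^\dagger A$ (argue as in the proof of Theorem~\ref{caractdiamond}), and then $BA^-AA^\dagger=B A^- (BB^\dagger A)A^\dagger$; using $A^-$ is an inner inverse this needs a short manipulation — actually the cleanest route is: $\mathcal R(A)\subseteq\mathcal R(B)$ gives $\mathcal R(AA^\dagger)\subseteq\mathcal R(B)$, hence $AA^\dagger=BB^\dagger AA^\dagger$, and it remains to absorb $B^\dagger$ into $A^-AA^\dagger$; since $AA^\dagger A=A$, one has $BA^-AA^\dagger=BA^-A A^\dagger$ and the equality $BA^-AA^\dagger=AA^\dagger$ follows because $A^-A$ acts as the identity on $\mathcal R(A^*)=\mathcal R(A^\dagger)$ — I will present this as a one-line column/row space check rather than grinding symbols. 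Second, I would produce \emph{one} inner inverse $A^-$ for which $A^{-\dagger}A=A^{-\dagger}B$. Since $A^{-\dagger}A=A^\dagger A$ for any choice, the requirement is $A^\dagger A=A^{-\dagger}B=A^-AA^\dagger B$, and by hypothesis $A^\dagger B=A^\dagger A$, so this becomes $A^\dagger A=A^-AA^\dagger A=A^-A$; thus I simply need an $A^-\in\mathcal A\{1\}$ with $A^-A=A^\dagger A$, and $A^-=A^\dagger$ itself works (giving $A^{-\dagger}=A^\dagger$). Hence the forward direction holds with the specific choice $A^-=A^\dagger$.

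For the converse, suppose $A^{-\dagger}A=A^{-\dagger}B$ and $AA^{-\dagger}=BA^{-\dagger}$ for some $A^-\in\mathcal A\{1\}$. From $AA^{-\dagger}=AA^\dagger$ and the second equation, $AA^\dagger=BA^{-\dagger}=BA^-AA^\dagger$, so $\mathcal R(AA^\dagger)\subseteq\mathcal R(B)$, i.e.\ $\mathcal R(A)\subseteq\mathcal R(B)$. From the first equation and $A^{-\dagger}A=A^\dagger A$ we get $A^\dagger A=A^-AA^\dagger B$; premultiplying by $A$ yields $AA^\dagger A=A=AA^-AA^\dagger B=AA^\dagger B$ (using $AA^-A=A$ and $AA^\dagger A=A$), hence $A=AA^\dagger B$, and premultiplying by $A^\dagger$ gives $A^\dagger A=A^\dagger B$. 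These two facts, $\mathcal R(A)\subseteq\mathcal R(B)$ and $A^\dagger A=A^\dagger B$, are exactly $A\,*\leq\,B$.

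I expect the main obstacle to be the bookkeeping in the converse step showing $A=AA^\dagger B$: one must carefully use that $A^-$ is only an inner inverse (so $A^-A$ need not be Hermitian and $AA^-$ need not equal $AA^\dagger$), and route all cancellations through the identities $AA^-A=A$ and $AA^\dagger A=A$ rather than through any idempotent being orthogonal. Once that is cleanly done, everything else is a direct substitution. Finally, part (b) is obtained from (a) by the substitutions $A\mapsto A^*$, $B\mapsto B^*$, using that $A\,\leq*\,B$ iff $A^*\,*\leq\,B^*$, that $(A^*)^-=(A^-)^*$ realizes a general inner inverse of $A^*$, and that $(A^*)^{-\dagger}=(A^{\dagger-})^*$; I will state this transfer explicitly and omit the repeated computation.
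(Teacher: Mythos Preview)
Your converse direction is essentially fine (modulo the slip discussed below), but the forward implication has a genuine gap.

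\textbf{The forward direction fails.} Your plan is (i) show $AA^{-\dagger}=BA^{-\dagger}$ for \emph{every} $A^-\in\mathcal A\{1\}$, then (ii) take the particular choice $A^-=A^\dagger$ for the other equation. Claim (i) is false. Take
\[
A=\begin{pmatrix}1&0\\0&0\end{pmatrix},\qquad B=\begin{pmatrix}1&0\\1&1\end{pmatrix}.
\]
Then $A^*A=A=A^*B$ and $\mathcal R(A)\subseteq\mathcal R(B)$, so $A\,*\!\leq\,B$. With $A^-=A^\dagger=A$ one gets $A^{-\dagger}=A$, and
\[
AA^{-\dagger}=\begin{pmatrix}1&0\\0&0\end{pmatrix}\neq\begin{pmatrix}1&0\\1&0\end{pmatrix}=BA^{-\dagger}.
\]
So not only does the ``for every $A^-$'' claim fail, it fails precisely for the choice $A^-=A^\dagger$ that you intend to use. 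The hand-waved step ``$BA^-AA^\dagger=AA^\dagger$ because $A^-A$ acts as the identity on $\mathcal R(A^*)$'' is where things go wrong: $A^-A$ acts as the identity on $\mathcal R(A^*)$ on the \emph{left}, but here $A^-A$ is being applied on the right of $B$, and $B$ need not map into $\mathcal R(A)$.

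\textbf{What is missing.} The correct inner inverse is not $A^\dagger$; you have to produce one adapted to $B$. The paper's device is to invoke that $A\,*\!\leq\,B$ implies $A\leq^-B$, hence there exists $A^-\in\mathcal A\{1\}$ with $A^-A=A^-B$ and $AA^-=BA^-$. Post-multiplying $AA^-=BA^-$ by $AA^\dagger$ gives $AA^{-\dagger}=BA^{-\dagger}$, and pre-multiplying $A=AA^\dagger B$ (from $A^\dagger A=A^\dagger B$) by this same $A^-$ gives $A^{-\dagger}A=A^-A=A^-AA^\dagger B=A^{-\dagger}B$. Without this ``minus order'' step you cannot manufacture the right $A^-$.

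\textbf{A secondary slip.} Your ``key structural identity'' $A^{-\dagger}A=A^\dagger A$ is incorrect in general: from $A^{-\dagger}=A^-AA^\dagger$ one gets $A^{-\dagger}A=A^-A$, which depends on $A^-$. (By contrast $AA^{-\dagger}=AA^\dagger$ is correct.) Your converse argument still goes through once you replace $A^\dagger A$ by $A^-A$ on the left-hand side, since pre-multiplying $A^-A=A^-AA^\dagger B$ by $A$ yields $A=AA^\dagger B$ and then $A^\dagger A=A^\dagger B$ as you wrote.
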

\begin{proof}
$(a)$ Suppose that $A\,*\leq\,B$. So, $A^\dag A = A^\dag B$. Pre-multiplying for $A$ and taking into account that  $A^\dag$ is a inner inverse of $A$, we obtain 
\begin{equation} \label{eq_no_esta}
A= A A^\dag B.
\end{equation}  
It is well known that the left star order implies the minus order, so there exists a matrix $A^{-} \in \A\{1\}$ such that $A^- A=A^- B$ 
and $A A^-=B A^-$. Pre-multiplying $(\ref{eq_no_esta})$ by this  matrix $A^{-}$, we obtain $A^{- \dag}A=A^{- \dag}B$ since $A^{-} A= A^{- \dag}A$. Now, post-multiplying $A A^-=B A^-$   by $AA^\dag$ we obtain $AA^-AA^\dag= B A^- AA^\dag$.
So, $AA^{- \dag}=BA^{- \dag}$. 

Now, suppose that exists $A^{- \dag}\in \mathcal{A}{\{-\dagger \}}$  such that $A^{- \dag}A=A^{- \dag}B \quad \text{and} \quad AA^{- \dag}=BA^{- \dag}.$

Pre-multiplyng $A^{- \dag}A=A^{- \dag}B$ by $ A^\dag A$, it results that $A^\dag A= A^\dag B$.
Similary, post-multiplying $AA^{- \dag}=BA^{- \dag}$ by $A$, we obtain $A A^- A= B A^- A$. So, $A= B (A^- A)$, i.e, ${\Ra}(A)\, \subseteq \,{\Ra}(B)$. Therefore $A\,*\leq\,B$.

$(b)$ The proof is similar to $(a)$.
\end{proof}

\section{Acknowledgement}
The authors would like to thank the anonymous referees for their careful reading of this article and their valuable comments and suggestions that help us to improve the reading of the paper.


\end{document}